\newtheorem{theorem}{Theorem}[section]
\newtheorem{example}[theorem]{Example}
\newtheorem{lemma}[theorem]{Lemma}
\newtheorem{proposition}[theorem]{Proposition}
\newtheorem{remark}[theorem]{Remark}
\newenvironment{proof}[1][Proof]{\noindent\textbf{#1.} }{\ \rule{0.5em}{0.5em}}
\numberwithin{equation}{section}
\begin{document}

\title{Global boundedness of weak solutions with finite energy\ to a general
class of Dirichlet problems}
\author{Giovanni Cupini$^{1}$, Paolo Marcellini$^{2}$\thanks{%
Corresponding author.} \\
%EndAName
$\quad $ \\
{\normalsize $^{1}$Dipartimento di Matematica, Universit\`a di Bologna }\\
{\normalsize Piazza di Porta S. Donato 5, 40126 - Bologna, Italy }\\
{\normalsize giovanni.cupini@unibo.it }\\
{\normalsize $^{2}$Dipartimento di Matematica e Informatica ``U. Dini'',
Universit\`a di Firenze}\\
{\normalsize Viale Morgagni 67/A, 50134 - Firenze, Italy}\\
{\normalsize paolo.marcellini@unifi.it}}
\date{}
\maketitle

\begin{abstract}
As explained in detail in the prologue to this manuscript, boundedness of
weak solutions for general classes of elliptic equations in divergence form
is a classic tool for achieving higher regularity. We propose here some
global boundedness results under general assumptions that can be applied to
several cases studied in the recent and extensive literature on partial
differential equations \textit{under general growth}. In particular, we
propose the class of \textit{weak solutions with finite energy} in which to
search for solutions and in which regularity can be studied and achieved. We
emphasize that we are not limited to minimizers of certain integral
functionals, as often considered recently in this context of general growth,
but to the broader class of weak solutions to Dirichlet problems for general
nonlinear elliptic equations in divergence form.
\end{abstract}

\bigskip

\emph{Key words}: Non-uniform elliptic equations, Regularity of solutions,
Global boundedness, p,q-growth conditions, Weak solutions with finite energy.

\emph{Mathematics Subject Classification (2020)}: Primary: 35D30, 35J15,
35J60; Secondary: 49N60.

\bigskip

\section{Prologue}

After Guido Stampacchia \cite{Stampacchia 1965}, who in the 60s focused
Ennio De Giorgi's techniques on the study of second-order linear elliptic
differential equations in divergence form, the book by
Ladyzhenskaya-Uraltseva \cite{Ladyzhenskaya-Uraltseva 1968}, published in
1968, and the monograph by Gilbarg-Trudinger \cite{Gilbarg-Trudinger 1977},
dated 1977, are well known classical references for nonlinear partial
differential equations of elliptic type. We start from them to better
understand the regularity properties and problems that we are going to
discuss in this manuscript.

Theorem 7.1 at page 286 of \cite{Ladyzhenskaya-Uraltseva 1968} is a global
boundedness result for weak solutions to the following class of elliptic
equations in divergence form 
\begin{equation}
\operatorname{div}a\left( x,u,Du\right) =b\left( x,u,Du\right) ,
\label{differential equation in the prologue}
\end{equation}%
where $x\in \Omega ,$ bounded open set in $\mathbb{R}^{n}$ for some $n\geq 2$%
, $u:\Omega \rightarrow \mathbb{R}$ and $Du:\Omega \rightarrow \mathbb{R}%
^{n} $ its gradient.

\begin{theorem}[{Ladyzhenskaya-Uraltseva \protect\cite[Theor. 7.1, Chapter 4]%
{Ladyzhenskaya-Uraltseva 1968}}]
\label{Ladyzhenskaya-Uraltseva Theorem}Assume the coercivity condition for
the vector field $a=a\left( x,u,\xi \right) $, $a:\Omega \times \mathbb{R}%
\times \mathbb{R}^{n}\rightarrow \mathbb{R}^{n}$, 
\begin{equation}
\left( a\left( x,u,\xi \right) ,\xi \right) \geq c_{0}\left( \left\vert
u\right\vert \right) \,\left\vert \xi \right\vert ^{p}-\left( 1+\left\vert
u\right\vert ^{\theta _{1}}\right) c_{1}\left( x\right)
\label{coercivity condition Lad-Ural}
\end{equation}%
and the unilateral growth for $b=b\left( x,u,\xi \right) $, $b:\Omega \times 
\mathbb{R}\times \mathbb{R}^{n}\rightarrow \mathbb{R}\,$, 
\begin{equation}
{\operatorname{sgn}}\left( {u}\right) {b\left( x,u,\xi \right) \geq -\,c_{2}\left(
x\right) \left( 1+\left\vert u\right\vert ^{\theta _{2}}\right) \left\vert
\xi \right\vert ^{n-\varepsilon }-\left( 1+\left\vert u\right\vert ^{\theta
_{2}}\right) c_{3}(x)\,,}  \label{growth condition for b Lad-Ural}
\end{equation}%
for some nonnegative $c_{i}$, $i=0,1,2,3$, (in particular $c_{0}\left(
\left\vert u\right\vert \right) \geq const>0$) and some other restrictions
on the parameters (as described at page 286 of \cite{Ladyzhenskaya-Uraltseva
1968}). Then every weak solution to the differential equation (\ref%
{differential equation in the prologue}) in the Sobolev class $W^{1,p}\left(
\Omega \right) \cap L^{p^{\ast }}\left( \Omega \right) $, which is bounded
at the boundary $\partial \Omega $, is also bounded in all $\overline{\Omega 
}$.
\end{theorem}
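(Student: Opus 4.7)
The natural strategy is the classical De Giorgi--Stampacchia truncation method, adapted so that the sub-natural growth term $|Du|^{n-\varepsilon}$ appearing on the right-hand side of \eqref{growth condition for b Lad-Ural} can be reabsorbed by the coercive term $|Du|^p$ on the left. Since the lower bound for $u$ follows by replacing $u$ with $-u$ (and $a,b$ with their antisymmetric counterparts), I will focus on the upper bound. Set $k_{0}:=\sup_{\partial \Omega }u^{+}$, which is finite by hypothesis, and for $k\geq k_{0}$ define the super-level set $A_{k}:=\{x\in \Omega :u(x)>k\}$ and the admissible test function $\varphi :=(u-k)_{+}$, which belongs to $W_{0}^{1,p}(\Omega )\cap L^{p^{\ast }}(\Omega )$ thanks to the finite energy assumption.

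\noindent\textbf{Energy inequality.} Inserting $\varphi $ in the weak formulation of \eqref{differential equation in the prologue} and exploiting $D\varphi =Du\,\mathbf{1}_{A_{k}}$, I would obtain
\begin{equation*}
\int_{A_{k}}\bigl(a(x,u,Du),Du\bigr)\,\d x=-\int_{A_{k}}b(x,u,Du)(u-k)\,\d x.
\end{equation*}
On $A_{k}$ (assuming $k_{0}\geq 0$) one has $\operatorname{sgn}(u)=1$, so the unilateral bound \eqref{growth condition for b Lad-Ural} turns the right-hand side into an upper estimate. Combined with the coercivity \eqref{coercivity condition Lad-Ural}, this yields
\begin{equation*}
c_{0}\int_{A_{k}}|Du|^{p}\,\d x\leq \int_{A_{k}}\!\!\bigl[(1+|u|^{\theta _{1}})c_{1}+c_{3}(1+|u|^{\theta _{2}})(u-k)\bigr]\d x+\int_{A_{k}}\!\!c_{2}(1+|u|^{\theta _{2}})|Du|^{n-\varepsilon }(u-k)\,\d x.
\end{equation*}
The critical step is applying Young's inequality to the last integral with exponents $p/(n-\varepsilon )$ and $p/(p-n+\varepsilon )$, so that the $|Du|^{p}$ contribution is absorbed into the left-hand side and one is left with a power of $(u-k)(1+|u|^{\theta _{2}})$ on $A_{k}$.

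\noindent\textbf{Sobolev embedding and iteration.} Since $\varphi \in W_{0}^{1,p}(\Omega )$, Sobolev's inequality gives $\|(u-k)_{+}\|_{L^{p^{\ast }}(\Omega )}^{p}\leq C\int_{A_{k}}|Du|^{p}\,\d x$. Using H\"older inequality on the right-hand side of the absorbed estimate---bounding terms like $\int_{A_{k}}(1+|u|^{\gamma })\,\d x$ by a power of $\|(u-k)_{+}\|_{L^{p^{\ast }}}$ together with a positive power of $|A_{k}|$, exploiting both the assumption $u\in L^{p^{\ast }}$ and the summability of the $c_{i}$'s---I expect to arrive at an inequality of the form
\begin{equation*}
(h-k)^{\alpha }|A_{h}|^{\beta _{1}}\leq C\,|A_{k}|^{\beta _{2}},\qquad k_{0}\leq k<h,
\end{equation*}
with $\beta _{2}>\beta _{1}$. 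At this point Stampacchia's lemma (Lemma 4.1, Ch.~II of \cite{Ladyzhenskaya-Uraltseva 1968}) produces a threshold $d_{0}<\infty $, depending explicitly on $C$, the ratio $\beta _{2}/\beta _{1}$ and $|A_{k_{0}}|$, such that $|A_{k_{0}+d_{0}}|=0$, i.e.\ $u\leq k_{0}+d_{0}$ almost everywhere in $\Omega $.

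\noindent\textbf{Main obstacle.} The delicate point is the calibration of the exponents. The auxiliary relations listed at p.~286 of \cite{Ladyzhenskaya-Uraltseva 1968} (restrictions on $\theta _{1},\theta _{2}$, on the summability exponents of $c_{1},c_{2},c_{3}$, and the compatibility $n-\varepsilon <p$) are precisely what is needed so that, after the Young absorption and the H\"older/Sobolev step, the resulting inequality genuinely iterates, i.e.\ $\beta _{2}>\beta _{1}$. I expect the bookkeeping of these exponents---and in particular handling the $|u|^{\theta _{2}}$ factor in the sub-critical $|Du|^{n-\varepsilon }$ term, which couples the unknown to its gradient at a non-natural power---to be the real technical difficulty, while the De Giorgi--Stampacchia scheme itself proceeds in a standard fashion once the energy inequality is in place.
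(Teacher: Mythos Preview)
The paper does not prove this theorem. Theorem~\ref{Ladyzhenskaya-Uraltseva Theorem} is quoted in the Prologue as a classical result from \cite{Ladyzhenskaya-Uraltseva 1968}; it is stated for historical context and to motivate the difficulties that arise when the operator fails to satisfy natural growth conditions. There is therefore no ``paper's own proof'' to compare against.

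That said, your outline is the standard De~Giorgi--Stampacchia truncation argument and is correct in spirit. It is also very close to how the present paper proves its own main results (Theorems~\ref{main boundeness result} and~\ref{main boundeness result with epsilon}): the test function $(|u|-k)_+\operatorname{sgn}(u)$, a Caccioppoli-type energy estimate on the super-level sets $A_k$, absorption of the gradient term coming from $b$ via Young's inequality, and then a geometric iteration on the decreasing quantities $J_h$ (the paper uses the recursive lemma from \cite[Lemma~7.1]{Giusti 2003 book} rather than Stampacchia's level-set lemma, but the two are interchangeable here). Your identification of the exponent bookkeeping---in particular the need for $n-\varepsilon<p$ so that $|Du|^{n-\varepsilon}$ can be absorbed, and the compatibility of $\theta_1,\theta_2$ with the summability of the coefficients---as the real technical content is exactly right; these are precisely the ``other restrictions on the parameters'' alluded to in the statement.
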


A similar result was obtained by Gilbarg-Trudinger \cite{Gilbarg-Trudinger
1977} in Chapter 10 of their celebrated book. The authors named this result 
\textit{maximum principle for divergence form operators} (their quasilinear
elliptic operators are defined in formula (10.5) of \cite{Gilbarg-Trudinger
1977}). The boundedness result was formulated as \textit{an a-priori estimate%
} for sub-solutions (and of course for solutions too).

\begin{theorem}[{Gilbarg-Trudinger \protect\cite[Theorem 10.9]%
{Gilbarg-Trudinger 1977}}]
\label{Gilbarg-Trudinger Theorem}Let $u\in C^{0}\left( \overline{\Omega }%
\right) \cap C^{1}\left( \Omega \right) $ be a weak solution to (\ref%
{differential equation in the prologue}). Suppose that the vector field $%
a\left( x,u,\xi \right) $ satisfies the coercivity condition 
\begin{equation}
\left( a\left( x,u,\xi \right) ,\xi \right) \geq c_{0}\,\left( \left\vert
\xi \right\vert ^{p}-\left\vert u\right\vert ^{p}-1\right)
\label{coercivity condition Gil-Trud}
\end{equation}%
and that $b\left( x,u,\xi \right) $ satisfies the unilateral growth
condition 
\begin{equation}
{\operatorname{sgn}(u)b\left( x,u,\xi \right) \geq }\left\{ 
\begin{array}{l}
{-\,c_{1}}\,\left( \left\vert \xi \right\vert ^{p-1}+\left\vert u\right\vert
^{p-1}+1\right) \,,\;\;\;\;\text{if}\;\;p>1 \\ 
{-\,c_{2}}\,,\;\;\;\;\text{if}\;\;p=1%
\end{array}%
\right. {\,,}  \label{growth condition for b Gil-Trud}
\end{equation}%
for some positive constants $c_{i}$, $i=0,1,2$. Then the maximum of $u$ in $%
\overline{\Omega }$ can be estimated by the maximum of $u$ on the boundary $%
\partial \Omega $, the dimension $n$, the measure $\left\vert \Omega
\right\vert $ and the constants $c_{i}$, $i=0,1,2$.
\end{theorem}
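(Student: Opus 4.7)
The strategy is the classical De Giorgi--Stampacchia truncation method, applied to $u$ to bound $\sup_{\Omega} u$ from above; the corresponding bound on $\inf_{\Omega} u$ then follows from the identical argument applied to $-u$, since hypotheses (\ref{coercivity condition Gil-Trud})--(\ref{growth condition for b Gil-Trud}) are preserved under the substitution $u \mapsto -u$ thanks to the $\operatorname{sgn}(u)$ factor. Set $M := \sup_{\partial \Omega} u^{+}$; for every level $k \geq \max\{M,1\}$, the truncation $v := (u-k)^{+}$ belongs to $W_{0}^{1,p}(\Omega)$ and is therefore admissible as a test function. I write $A(k) := \{x \in \Omega : u(x) > k\}$.

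The first step is to insert $v$ into the weak formulation of (\ref{differential equation in the prologue}), invoke coercivity (\ref{coercivity condition Gil-Trud}) on the left-hand side, and use that $\operatorname{sgn}(u)=1$ on $A(k)$ so that (\ref{growth condition for b Gil-Trud}) gives the upper bound $-b(x,u,Du) \leq c_{1}(|Du|^{p-1}+|u|^{p-1}+1)$ on the right. Young's inequality (to absorb a small multiple of $|Du|^{p}$ to the left) together with the elementary bound $u^{p}\leq 2^{p-1}((u-k)^{p}+k^{p})$ should then yield an energy estimate of the form
\begin{equation*}
\int_{A(k)} |Du|^{p}\,dx \;\leq\; C\int_{A(k)} (u-k)^{p}\,dx \;+\; C(k^{p}+1)|A(k)|,
\end{equation*}
with $C=C(p,c_0,c_1)$; the borderline case $p=1$ is handled analogously, using the alternative bound $-b\leq c_{2}$.

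Next I combine this with the Sobolev embedding $\|v\|_{L^{p^{*}}(\Omega)}\leq C_{S}\|Dv\|_{L^{p}(\Omega)}$ and H\"older's inequality $\|v\|_{L^{p}(A(k))}^{p}\leq \|v\|_{L^{p^{*}}(A(k))}^{p}|A(k)|^{p/n}$. Once $|A(k)|$ is sufficiently small the $(u-k)^{p}$ term is reabsorbed, and exploiting the pointwise bound $v\geq h-k$ on $A(h)$ for $h>k$ produces the Stampacchia-type recursion
\begin{equation*}
|A(h)| \;\leq\; \frac{C(k^{p}+1)^{p^{*}/p}}{(h-k)^{p^{*}}}\,|A(k)|^{p^{*}/p}.
\end{equation*}
Since $p^{*}/p>1$, the classical iteration lemma then supplies a level $k_{\infty}$, depending only on $M$, $n$, $|\Omega|$, $c_{0}$, $c_{1}$, $c_{2}$, for which $|A(k_{\infty})|=0$, giving the desired a priori bound $u \leq k_{\infty}$ a.e.\ in $\Omega$.

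The main technical obstacle I anticipate is the careful bookkeeping of the $k$-dependence in the constants: the term $|u|^{p}$ coming from (\ref{coercivity condition Gil-Trud}) and the term $|u|^{p-1}(u-k)$ generated by (\ref{growth condition for b Gil-Trud}) must be split into a part controlled by $(u-k)^{p}$ (reabsorbed via Sobolev, once $|A(k)|$ is small) and a part controlled by $k^{p}|A(k)|$ (handled by the iteration itself). It is precisely this decomposition that makes the exponent on $|A(k)|$ in the final recursion strictly larger than one, which is what allows Stampacchia's lemma to close the argument and to yield an explicit dependence of $k_{\infty}$ on the data.
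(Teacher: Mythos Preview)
The paper does not prove this theorem. Theorem~\ref{Gilbarg-Trudinger Theorem} is stated in the Prologue purely as a \emph{reference result} from \cite[Theorem 10.9]{Gilbarg-Trudinger 1977}, alongside the Ladyzhenskaya--Ural'tseva theorem, to set the historical context and to motivate the difficulties that arise under non-standard growth. No proof is given in the manuscript; indeed, the discussion immediately following the statement criticises the result's applicability when the operator lacks natural $p$-growth from above. So there is no ``paper's own proof'' to compare against.

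That said, your proposal is the correct classical argument and is essentially the one carried out in \cite{Gilbarg-Trudinger 1977}. It is also entirely in the spirit of what the authors do for their own Theorems~\ref{main boundeness result} and~\ref{main boundeness result with epsilon} in Section~\ref{Section: Proof}: test with $\varphi=(|u|-k)_+\operatorname{sgn}(u)$, derive a Caccioppoli-type inequality on the superlevel sets $A_k$, then feed this into a De~Giorgi/Stampacchia iteration (they iterate on $J_h=\int_{A_{k_h}}|Du|^p\,dx$ along a dyadic sequence $k_h\nearrow d$ and invoke \cite[Lemma 7.1]{Giusti 2003 book}). Two small remarks on your write-up: the theorem as stated only asks for a bound on $\sup_{\overline\Omega}u$, so the passage to $-u$ is unnecessary here; and you should flag the case $p\geq n$ separately, since your use of $p^{*}/p>1$ tacitly assumes $p<n$.
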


Unfortunately there is a problem in these results if the differential
operators involved are not either the standard Laplacian, or the $p-$%
Laplacian, or similar operators satisfying the so-called \textit{natural
growth conditions}, such as for instance in (\ref{growth conditions by
Serrin}) below. In fact, in the Ladyzhenskaya-Uraltseva Theorem \ref%
{Ladyzhenskaya-Uraltseva Theorem}, a weak solution in the Sobolev class $%
W^{1,p}\left( \Omega \right) \cap L^{p^{\ast }}\left( \Omega \right) $ not
necessarily makes finite the pairing 
\begin{equation}
\int_{\Omega }\left( a\left( x,u\left( x\right) ,Du\left( x\right) \right)
,\varphi \left( x\right) \right) \,dx  \label{pairing in the prologue}
\end{equation}%
against a test function $\varphi \in W_{0}^{1,p}\left( \Omega \right) \cap
L^{p^{\ast }}\left( \Omega \right) $, unless the operator has a $p-$growth
not only from below, but from above too. Necessarily this pairing (\ref%
{pairing in the prologue}) must be finite to respect the notion of weak
solution. Similarly, in Theorem \ref{Gilbarg-Trudinger Theorem} by
Gilbarg-Trudinger, a-priori a weak solution $u\in C^{1}\left( \Omega \right) 
$ makes finite the above pairing (\ref{pairing in the prologue}) for any
test function $\varphi \in C_{0}^{1}\left( \Omega \right) $ with compact
support in $\Omega $; however we do not have uniform estimates. The result
makes not easy, sometimes impossible, to proceed from the a-priori estimate
to the final boundedness, again if the differential operator is not of the
standard $p-$Laplacian type, if it satisfies some more general non-standard
growth conditions.

This fact has been clearly pointed out also in the more recent monograph by
Pucci-Serrin \cite{Pucci-Serrin 2007}. In fact, previously, James Serrin
himself already noted in 1964 (see the quasilinear equation (1) and the
coercivity and growth condition (2) in \cite{Serrin 1964}) that it is
necessary to impose a growth condition on the operator in order to make the
pairing well defined. He proposed in \cite[(2) and (6)]{Serrin 1964} what
nowadays is considered the \textit{standard growth} 
\begin{equation}
\left\{ 
\begin{array}{l}
\left( a\left( x,u,\xi \right) ,\xi \right) \geq c_{0}\,\left( \left\vert
\xi \right\vert ^{p}-\left\vert u\right\vert ^{p}-1\right) \\ 
\left\vert a\left( x,u,\xi \right) \right\vert \leq c_{1}\,\big( \left\vert
\xi \right\vert ^{p-1}+\left\vert u\right\vert ^{p-1}+1\big) \\ 
\left\vert b\left( x,u,\xi \right) \right\vert \leq c_{2}\,\big( \left\vert
\xi \right\vert ^{p-1}+\left\vert u\right\vert ^{p-1}+1\big)%
\end{array}%
\right.  \label{growth conditions by Serrin}
\end{equation}%
for some $p>1$ and some positive constants $c_{i}$, $i=0,1,2$. We can easily
test that (\ref{growth conditions by Serrin})$_{2}$ and (\ref{growth
conditions by Serrin})$_{3}$ respectively imply that $\left\vert a\left(
x,u,Du\right) \right\vert \in L_{\mathrm{loc}}^{p^{\prime }}\left( \Omega
\right) $ and $b\left( x,u,Du\right) \in L_{\mathrm{loc}}^{p^{\prime
}}\left( \Omega \right) $ (here as usual $\frac{1}{p^{\prime }}+\frac{1}{p}%
=1 $) whenever $u\in W_{\mathrm{loc}}^{1,p}\left( \Omega \right) $. From the
Appendix of the book by Pucci-Serrin \cite[page 54]{Pucci-Serrin 2007} we
read: \textit{``The delicacy of the structure can be emphasized by observing
first that Gilbarg and Trudinger \cite{Gilbarg-Trudinger 1977} define weak
solutions exactly as we do here (see equation (8.30) in \cite%
{Gilbarg-Trudinger 1977}), while in their following Theorem 8.15 (for the
case of linear equations) they consider solutions in }$W^{1,2}(%
%TCIMACRO{\U{3a9} }%
%BeginExpansion
\Omega
%EndExpansion
)$\textit{, these being 2-regular by linearity and so legitimate in forming
test functions. On the other hand, for Lemma 10.8 in \cite[page 273]%
{Gilbarg-Trudinger 1977} their solution is assumed to be in }$C^{1}(%
%TCIMACRO{\U{3a9} }%
%BeginExpansion
\Omega
%EndExpansion
)$\textit{, so one then must have }$\left\vert a(\text{\textperiodcentered }%
\ ,u,Du)\right\vert \in L_{\mathrm{loc}}^{1}(%
%TCIMACRO{\U{3a9} }%
%BeginExpansion
\Omega
%EndExpansion
)$\textit{\ in order to use the theory of weak solutions. \ldots\ Even here,
however, one must also deal with their later statement that solutions can be
allowed in the space }$W^{1,p}(%
%TCIMACRO{\U{3a9} }%
%BeginExpansion
\Omega
%EndExpansion
)$\textit{, see \cite[page 277]{Gilbarg-Trudinger 1977}. This in turn
requires the }$p-$\textit{regularity condition }$\left\vert a(\text{%
\textperiodcentered }\ ,u,Du)\right\vert \in L_{\mathrm{loc}}^{p^{\prime }}(%
%TCIMACRO{\U{3a9} }%
%BeginExpansion
\Omega
%EndExpansion
)$\textit{, a condition which is not indicated in \cite{Gilbarg-Trudinger
1977}. Of course, this begs the question, under what conditions can one in
fact obtain }$\left\vert a(\text{\textperiodcentered }\ ,u,Du)\right\vert
\in L_{\mathrm{loc}}^{p^{\prime }}(%
%TCIMACRO{\U{3a9} }%
%BeginExpansion
\Omega
%EndExpansion
)$\textit{\ when }$u\in W_{\mathrm{loc}}^{1,p}(%
%TCIMACRO{\U{3a9} }%
%BeginExpansion
\Omega
%EndExpansion
)$\textit{? The simplest (though not the only) answer is in (\ref{growth
conditions by Serrin})}$_{2}$\textit{\thinspace .''}

For completeness we mention Theorems 6.1.1 and 6.1.2 by Pucci-Serrin \cite[%
page 128]{Pucci-Serrin 2007}, who considered, in their terminology (see the
footnote at page 52 of \cite{Pucci-Serrin 2007}), $p-$\textit{regular} weak
solutions to (\ref{differential equation in the prologue}); i.e., solutions
in $W_{\mathrm{loc}}^{1,p}(%
%TCIMACRO{\U{3a9} }%
%BeginExpansion
\Omega
%EndExpansion
)$ under the coercivity and growth conditions (\ref{growth conditions by
Serrin}). \ They proved the global boundedness of solutions (and
subsolutions too). In their Theorem 6.1.2 the constants in (\ref{growth
conditions by Serrin}) can be replaced by functions in some Lebesgue spaces.

Other \textit{global boundedness results} can be found in the literature,
related to elliptic equations in divergence form as in (\ref{differential
equation in the prologue}). Most of them deal with Dirichlet problems for
the $p-$Laplacian and a zero boundary datum. We refer to Guedda-Veron \cite%
{Guedda-Veron 1989} in 1989, the article by Egnell \cite[see the Appendix]%
{Egnell 1998} dated 1998; Talenti \cite{Talenti 1979} in 1979 and Cianchi 
\cite{Cianchi 1997} in 1997, with fine results based on rearrangements. For
more recent Dirichlet problems of $p-$Laplacian-type and other boundary
conditions we refer for instance to Marino-Winkert \cite{Marino-Winkert 2019}%
.

Since 1989 more general assumptions (in particular those named $p,q-$\textit{%
growth conditions}) have been introduced in \cite{Marcellini 1989}-\cite%
{Marcellini JOTA 1996}, in order to obtain $C_{\mathrm{loc}}^{0,1}\left(
\Omega \right) =W_{\mathrm{loc}}^{1,\infty }\left( \Omega \right) $ and $%
C^{1,\alpha }\left( \Omega \right) $ regularity to weak solutions of general
classes of differential equations as in (\ref{differential equation in the
prologue}). We mean, for example, related to these model cases 
\begin{equation}
\operatorname{div}a\left( Du\right) :=\sum_{i=1}^{n}\tfrac{\partial }{\partial x_{i}}%
\left( \left\vert u_{x_{i}}\right\vert ^{p_{i}-2}u_{x_{i}}\right)
,\;\;\;a\left( \xi \right) =\left( a_{i}\left( \xi \right) \right)
_{1=1,2,\ldots ,n}:=\left( \left\vert \xi _{i}\right\vert ^{p_{i}-2}\xi
_{i}\right) ;  \label{example 1 in prologue}
\end{equation}%
\begin{equation}
\operatorname{div}a\left( Du\right) :=\sum_{i=1}^{n}\tfrac{\partial \left(
\left\vert Du\right\vert ^{p-2}\log \left( 1+\left\vert Du\right\vert
^{2}\right) u_{x_{i}}\right) }{\partial x_{i}},\;\;\;a\left( \xi \right)
:=\left( \left\vert \xi \right\vert ^{p-2}\log \left( 1+\left\vert \xi
\right\vert ^{2}\right) \xi _{i}\right) ;  \label{example 2 in prologue}
\end{equation}%
\begin{equation}
\operatorname{div}a\left( x,Du\right) :=\sum_{i=1}^{n}\tfrac{\partial }{\partial
x_{i}}\left( \left\vert Du\right\vert ^{p\left( x\right) -2}u_{x_{i}}\right)
,\;\;\;\;\;a\left( x,\xi \right) :=\left( \left\vert \xi \right\vert
^{p\left( x\right) -2}\xi _{i}\right) .  \label{example 3 in prologue}
\end{equation}%
Of course coefficients depending on $x$ and $u$ are also allowed. The $p,q-$%
growth conditions ($p\leq q$) are related to the coercivity and growth ($%
c_{i}>0$) 
\begin{equation}
\left\{ 
\begin{array}{l}
\left( a\left( x,u,\xi \right) ,\xi \right) \geq c_{0}\,\left( \left\vert
\xi \right\vert ^{p}-\left\vert u\right\vert ^{p}-1\right) \\ 
\left\vert a\left( x,u,\xi \right) \right\vert \leq c_{1}\,\big( \left\vert
\xi \right\vert ^{q-1}+\left\vert u\right\vert ^{q-1}+1\big)%
\end{array}%
\right.  \label{p,q}
\end{equation}%
and can be read above by posing in (\ref{example 1 in prologue}) $p$ equal
to the minimum of the $\left\{ p_{i}\right\} _{i=1,2,\ldots ,n}$ and $q$ the
maximum; while in (\ref{example 2 in prologue}) we can fix $q:=p+\varepsilon 
$ for an arbitrarily fixed $\varepsilon >0$. In (\ref{example 3 in prologue}%
), finalized to interior regularity we can fix any subset, for instance a
ball $B_{r}$ compactly contained in $\Omega $ and define $p:=\inf \left\{
p\left( x\right) :\;x\in B_{r}\right\} $, $q:=\sup \left\{ p\left( x\right)
:\;x\in B_{r}\right\} $. Note that, if the exponent $p\left( x\right) $ in (%
\ref{example 3 in prologue}) is continuous (as usually considered in
literature), by choosing the radius $r$ of the ball $B_{r}$ sufficiently
small, we can fix in (\ref{example 3 in prologue}) (as well in (\ref{example
2 in prologue})) the ratio $q/p\geq 1$ close to $1$. This remark is useful
when we compare it with the usual (sometimes necessary for \textit{interior
regularity}) assumption\ that $q/p=1+O\left( 1/n\right) $; in the examples (%
\ref{example 2 in prologue}), (\ref{example 3 in prologue}), this assumption
can be satisfied in all dimensions $n\geq 2$ for interior regularity.

\textit{General growth conditions is the context for non-uniform ellipticity}%
. I.e., often elliptic differential operators which satisfy some
non-standard growth conditions are not uniformly elliptic. We recall that
for the differential operator on the left-hand side of (\ref{differential
equation in the prologue}), defined through a vector field $a\left( x,u,\xi
\right) =\left( a_{i}\left( x,u,\xi \right) \right) _{1=1,2,\ldots ,n}$
differentiable with respect to the variable $\xi \in \mathbb{R}^{n}$, 
\textit{ellipticity} can be tested by the inequalities 
\begin{equation}
g_{1}\left( \left\vert \xi \right\vert \right) \left\vert \lambda
\right\vert ^{2}\leq \sum_{i,j=1}^{n}\frac{\partial a^{i}\left( x,u,\xi
\right) }{\partial \xi _{j}}\lambda _{i}\lambda _{j}\leq g_{2}\left(
\left\vert \xi \right\vert \right) \left\vert \lambda \right\vert ^{2},
\label{ellipticity}
\end{equation}%
valid for some positive functions $g_{1},g_{2}:\left[ 0,+\infty \right)
\rightarrow \left[ 0,+\infty \right) $ and for all $\lambda ,\xi \in \mathbb{%
R}^{n}$; for instance, in the $p,q-$growth case we can fix $g_{1}\left(
\left\vert \xi \right\vert \right) =m\left\vert \xi \right\vert ^{p-2}$ and $%
g_{2}\left( \left\vert \xi \right\vert \right) =M\left( 1+\left\vert \xi
\right\vert ^{q-2}\right) $, $0<m\leq M$, or similarly $g_{1}\left(
\left\vert \xi \right\vert \right) =m(1+\left\vert \xi \right\vert
^{2})^{\left( q-2\right) /2}$ and $g_{2}\left( \left\vert \xi \right\vert
\right) =M(1+\left\vert \xi \right\vert ^{2})^{\left( q-2\right) /2}$. While 
\textit{uniformly ellipticity} means that the \textit{modulus of ellipticity}
in (\ref{ellipticity}) is essentially the same from below and from above;
i.e., there exists a constant $c>0$ such that $g_{2}\left( \xi \right) \leq
c\,g_{1}\left( \xi \right) $ for all $\xi \in \mathbb{R}^{n}$; for example,
this happens for the $p-$Laplacian operator.

Non-uniformly elliptic problems recently received a strong impulse with the
relevant researches by Cristiana De Filippis, Giuseppe Rosario Mingione, Jan
Kristensen and others; see in particular \cite{De Filippis JMPA 2022}-\cite%
{De Filippis-Stroffolini 2023}. An interesting relevant case with $p,q-$%
growth conditions has been recently studied by Colombo-Mingione \cite%
{Colombo-Mingione 2015},\cite{Colombo-Mingione 2015 Bounded minimizers},
Baroni-Colombo-Mingione \cite{Baroni-Colombo-Mingione 2018} and
Eleuteri-Marcellini-Mascolo \cite{Eleuteri-Marcellini-Mascolo 2016},\cite%
{Eleuteri-Marcellini-Mascolo 2020}. It is related to the so-called \textit{%
double phase operators} 
\begin{equation}
\left\{ 
\begin{array}{l}
\sum_{i=1}^{n}\tfrac{\partial }{\partial x_{i}}\left( \left\vert
Du\right\vert ^{p-2}u_{x_{i}}\right) +a\left( x\right) \sum_{i=1}^{n}\tfrac{%
\partial }{\partial x_{i}}\left( \left\vert Du\right\vert
^{q-2}u_{x_{i}}\right) \\ 
a\left( x,\xi \right) =\left( a_{i}\left( x,\xi \right) \right) _{i=1,\ldots
,n}:=\left( \left\{ \left\vert \xi \right\vert ^{p-2}+a\left( x\right)
\left\vert \xi \right\vert ^{q-2}\right\} \xi _{i}\right)_{i=1,\ldots ,n}%
\end{array}%
\right. \,,  \label{double phase}
\end{equation}%
where $1<p<q$ and the continuous coefficient $a\left( x\right) $ is
nonnegative in $\Omega $. The operator is a $p-$Laplacian at the points $%
x\in \Omega $ such that $a\left( x\right) =0$, but essentially is a $q-$%
Laplacian where $a\left( x\right) >0$; in this case the added $p-$Laplacian
being a kind of "lower order" term (i.e., lower growth) with respect to the $%
q-$Laplacian. To be mentioned, since it is particularly relevant with
respect to the boundedness results given in this manuscript, the \textit{%
a-priori} boundedness of solutions assumed by Colombo-Mingione in \cite%
{Colombo-Mingione 2015 Bounded minimizers}. Our Theorems \ref{main
boundeness result} and \ref{main boundeness result with epsilon} well apply
to Dirichlet problems with general right-hand sides $b\left( x,u,Du\right) $
and double phase operators as in (\ref{double phase}).

\textit{Local boundedness} of solution to classes of anisotropic elliptic
equations or systems have been investigated by the authors \cite%
{Cupini-Marcellini-Mascolo 2014}--\cite{Cupini-Marcellini-Mascolo regularity
2024}, by DiBenedetto-Gianazza-Vespri \cite{DiBenedetto-Gianazza-Vespri 2016}
and Cupini-Leonetti-Mascolo \cite{Cupini-Leonetti-Mascolo 2017}.\ \textit{%
Local Lipschitz continuity} of weak solutions to nonlinear elliptic
equations and systems under non standard growth conditions started in 1989.
We refer to Marcellini \cite{Marcellini 1989}--\cite{Marcellini 2023},
Esposito-Leonetti-Mingione \cite{Esposito-Leonetti-Mingione 2004}, the
mentioned above articles by Baroni-Colombo-Mingione \cite{Colombo-Mingione
2015},\cite{Colombo-Mingione 2015 Bounded minimizers},\cite%
{Baroni-Colombo-Mingione 2018} and Eleuteri-Marcellini-Mascolo \cite%
{Eleuteri-Marcellini-Mascolo 2016},\cite{Eleuteri-Marcellini-Mascolo 2020},
specific for \textit{double phase operators} as in (\ref{double phase}). For
related results see also \cite{Bella-Schaffner 2020},\cite{Bildhauer-Fuchs
2005},\cite{Boegelein-Dacorogna-Duzaar-Marcellini-Scheven 2020}, \cite%
{Boegelein-Duzaar-Marcellini-Scheven JMPA 2021},\cite{Byun-Oh 2020},\cite%
{Chlebicka-DeFilippis 2019},\cite{Cianchi-Mazya 2014},\cite%
{Cianchi-Schaffner 2024},\cite{Ciani-Skrypnik-Vespri 2023},\cite%
{Cupini-Marcellini-Mascolo-Passarelli 2023},\cite{DeFilippis-Koch-Kristensen
2024},\cite{DeFilippis-Mingione ARMA 2021},\cite%
{Diening-Harjulehto-Hasto-Ruzicka 2011},\cite{DiMarco-Marcellini 2020},\cite%
{Eleuteri-Marcellini-Mascolo-Perrotta 2022},\cite{Giga-Tsubouchi 2022},\cite%
{Gmeineder-Kristensen 2022},\cite{Hirsch-Schaffner 2021},\cite%
{Liskevich-Skrypnik-2009},\cite{Marcellini-Nastasi-Pacchiano Camacho 2025},%
\cite{Nastasi-Pacchiano Camacho 2023},\cite{Ok 2017}.

Studies on \textit{multiplicity of solutions} often require boundedness of
weak solutions up to the boundary; these fields nowadays are so wide, also
in the context of \textit{general growth conditions}, therefore we limit
ourselves by quoting few articles: at least Mih\u{a}ilescu-Pucci-R\u{a}%
dulescu \cite{Mihailescu-Pucci-Radulescu 2008}, Zhang-R\u{a}dulescu \cite%
{Zhang-Radulescu 2018}, Papageorgiou-R\u{a}dulescu-Zhang \cite%
{Papageorgiou-Radulescu-Zhang 2022}, R\u{a}dulescu-Stapenhorst-Winkert \cite%
{Radulescu-Stapenhorst-Winkert 2025} and the references therein. Finally we
emphasize the article by DeFilippis-Mingione \cite{DeFilippis-Mingione
Notices 2025}, recently published in the \textit{Notices of the American
Mathematical Society}.

Some \textit{global boundedness results} have been recently obtained under
general growth conditions, often related to some specific operators, such as
for instance the double phase operator in (\ref{double phase}) and similar
cases. We refer for instance to Winkert-Zacher \cite[Theorem 1.1]%
{Winkert-Zacher 2012} for variable exponents, Perera-Squassina \cite[%
Proposition 2.4]{Perera-Squassina 2018} and Gasi\~{n}ski-Winkert \cite[%
Theorem 3.1]{Gasinski-Winkert 2020} specific for double-phase equations,
Ho-Kim-Winkert-Zhang \cite[Theorem 3.1]{Ho-Kim-Winkert-Zhang 2022} and
Ho-Winkert \cite[Theorem 4.2]{Ho-Winkert 2023} for double phase problems
with variable exponents; see also Section 3 of the review paper by
Papageorgiou-R\u{a}dulescu \cite{Papageorgiou-Radulescu 2024}. The authors'
recent article \cite{Cupini-Marcellini 2025} to our knowledge is the first
attempt to define a large general class of differential operators, and a
corresponding set of assumptions, which produce global boundedness of $%
W^{1,q}\left( \Omega \right) -$\textit{weak solutions} to the associated
Dirichlet problems. One of main differences with respect to this manuscript
is that here we are able to manage a natural Sobolev class included in $%
W^{1,p}\left( \Omega \right) $, in fact intermediate between $W^{1,p}\left(
\Omega \right) $ and $W^{1,q}\left( \Omega \right) $ - \textit{the class of
weak solutions with finite energy} - which is compatible with weak solutions
of Dirichlet problems associated to the differential equation (\ref%
{differential equation in the prologue}) and which not necessarily are
minimizer of an energy integral, and in fact usually in our context these
solutions are not minimizers.

Some more details about the manuscript \cite{Ho-Winkert 2023} published by
Ho-Winkert in 2023: it is one of the most recent and quoted article, about
regularity in this general context. The \textit{global boundedness} obtained
in \cite[Theorem 4.2]{Ho-Winkert 2023}, partly consequence of some embedding
results for Musielak--Orlicz Sobolev spaces, is related to the differential
operator of double-phase type with variable exponents $q\left( x\right) \geq
p\left( x\right) >1$ 
\begin{equation}
\sum_{i=1}^{n}\tfrac{\partial }{\partial x_{i}}\left\{ \left( \left\vert
Du\right\vert ^{p\left( x\right) -2}+a\left( x\right) \left\vert
Du\right\vert ^{q\left( x\right) -2}\right) u_{x_{i}}\right\} \,.
\label{double phase with variable exponents}
\end{equation}%
One of their conditions (see \cite[assumption $\left( H3\right) $ at page 9]%
{Ho-Winkert 2023}) concerns the exponents $p\left( x\right) ,q\left(
x\right) $ and the coefficient $a\left( x\right) $, which are Lipschitz
continuos functions in $\overline{\Omega }$, such that $1<\min \left\{ 
\tfrac{q\left( x\right) }{p\left( x\right) }:\;x\in \overline{\Omega }%
\right\} $ and $\max \left\{ \tfrac{q\left( x\right) }{p\left( x\right) }%
:\;x\in \overline{\Omega }\right\} <1+\tfrac{1}{n}$. The boundedness theorem
for weak solutions and the embedding results in \cite{Ho-Winkert 2023} for
sure are interesting and well emphasize the delicacy of the subject. On the
contrary, of course these results are conditioned by the \textit{structure
condition} imposed by the \textit{specific differential operator} (\ref%
{double phase with variable exponents}).

In this manuscript we propose an approach to global boundedness regularity
results for weak solutions for \textit{a large class of differential
operators with general growth conditions}, including most of the operators
already considered in literature, for instance all the above examples (\ref%
{example 1 in prologue}),(\ref{example 2 in prologue}),(\ref{example 3 in
prologue}),(\ref{double phase}),(\ref{double phase with variable exponents}%
). In the next Section we formulate in detail all the assumptions and the
global boundedness conclusion. Despite the fact that Theorems \ref{main
boundeness result} and \ref{main boundeness result with epsilon} can be
applied to a broad class of elliptic Dirichlet problems, we emphasize that
our assumptions are no more complicated than those described above, and
sometimes we simplify the context. Among other simplifications, although we
consider general growth for instance of the type $p,q$, we do not require a
bound of the previous type on the ratio $\frac{q}{p}$.

\section{Introduction and statements of main results\label{Section:
Introduction and statements}}

This manuscript is devoted to the study of weak solution with\textit{\
finite energy} to\ Dirichlet problems related to general class of elliptic
equations in divergence form 
\begin{equation}
\operatorname{div}a\left( x,u,Du\right) =b\left( x,u,Du\right) ,\;\;\;\;\;x\in
\Omega \,,  \label{differential equation}
\end{equation}%
where $\Omega $ is a bounded open set in $\mathbb{R}^{n}$, $n\geq 2$, the
vector field $a\left( x,u,\xi \right) :=\left( a^{i}\left( x,u,\xi \right)
\right) _{i=1,\ldots ,n}$ and the right-hand side $b\left( x,u,\xi \right) $
are Carath\'{e}odory maps defined in $\Omega \times \mathbb{R}\times \mathbb{%
R}^{n}$. We give conditions in order to obtain $u\in L^{\infty }\left(
\Omega \right) $; i.e., to obtain the \textit{global boundedness }of the
weak solutions $u$ in the closure $\overline{\Omega }$ of $\Omega $. These
weak solutions $u$ to the differential equation in (\ref{differential
equation}) belong to \textit{the Sobolev class }$W^{1,F}\left( \Omega
\right) $ of functions with \textit{finite energy}, defined in (\ref{the
Sobolev class}) below, allowing the vector field $a$ and the term $b$ in (%
\ref{differential equation}) \textit{to explicitly depend} on $u$, other
than on $x$ and, of course, on the gradient $Du$ of $u:\Omega \subset 
\mathbb{R}^{n}\rightarrow \mathbb{R}$.

The vector field $a\left( x,u,\xi \right) =\left( a^{i}\left( x,u,\xi
\right) \right) _{i=1,\ldots ,n}$ has a \textit{generalized variational
structure}, in the sense that \textit{it is associated to an energy integral}
of the form 
\begin{equation}
F\left( u\right) =\int_{\Omega }f\left( x,u,Du\right) \,dx\,,
\label{energy integral}
\end{equation}%
where $f:\Omega \times \mathbb{R}\times \mathbb{R}^{n}\rightarrow \left[
0,+\infty \right) $ is a nonnegative \textit{Carath\'{e}odory integrand};
i.e., $f=f\left( x,u,\xi \right) $ is measurable with respect to $x\in
\Omega $ and continuous in $\left( u,\xi \right) \in \mathbb{R\times R}^{n}$%
. Moreover $f\left( x,u,\xi \right) $ is \textit{convex} with respect to the
gradient variable $\xi =\left( \xi _{i}\right) _{i=1,\ldots ,n}\in \mathbb{R}%
^{n}$ for a.e. $x\in \Omega $ and all $u\in \mathbb{R}$. Once $\left(
x,u\right) $ are fixed, by convexity the function $f$ is locally Lipschitz
continuos with respect to $\xi \in \mathbb{R}^{n}$ and its gradient $D_{\xi
}f:\Omega \times \mathbb{R}\times \mathbb{R}^{n}\rightarrow \mathbb{R}^{n}$, 
$D_{\xi }f=(f_{\xi _{i}}\left( x,u,\xi \right) )_{i=1,\ldots ,n}$, exists
for almost every $\xi $ in $\mathbb{R}^{n}$. $f:\Omega \times \mathbb{R}%
\times \mathbb{R}^{n}\rightarrow \left[ 0,+\infty \right) $ is a nonnegative 
\textit{Carath\'{e}odory integrand}; i.e., $f=f\left( x,u,\xi \right) $ is
measurable with respect to $x\in \Omega $ and continuous in $\left( u,\xi
\right) \in \mathbb{R\times R}^{n}$. Moreover $f\left( x,u,\xi \right) $ is 
\textit{convex} with respect to the gradient variable $\xi =\left( \xi
_{i}\right) _{i=1,\ldots ,n}\in \mathbb{R}^{n}$ for a.e. $x\in \Omega $ and
all $u\in \mathbb{R}$. Once $\left( x,u\right) $ are fixed, by convexity the
function $f$ is locally Lipschitz continuos with respect to $\xi \in \mathbb{%
R}^{n}$ and its gradient $D_{\xi }f:\Omega \times \mathbb{R}\times \mathbb{R}%
^{n}\rightarrow \mathbb{R}^{n}$, $D_{\xi }f=(f_{\xi _{i}}\left( x,u,\xi
\right) )_{i=1,\ldots ,n}$, exists for almost every $\xi $ in $\mathbb{R}%
^{n} $. We assume that the function $\xi \mapsto f(x,u,\xi )$ and the
gradient-vector-field of its first derivatives $D_{\xi }f\left( x,u,\xi
\right) $ are Carath\'{e}odory maps.

We also assume that $f=f\left( x,u,\xi \right) $ is a $\Delta _{2}-$\textit{%
function with respect to} $\xi \in \mathbb{R}^{n}$; i.e., 
\begin{equation}
f\left( x,u,2\xi \right) \leq M\,f\left( x,u,\xi \right) \,
\label{Delta-two}
\end{equation}%
for a constant $M>1$ and for all $\left( x,u,\xi \right) \in \Omega \times 
\mathbb{R}\times \mathbb{R}^{n}$. For instance, if $f\left( x,u,\xi \right)
:=\left\vert \xi \right\vert ^{p}+a\left( x,u\right) \left\vert \xi
\right\vert ^{q}$ with $a\geq 0$ and $1\leq p\leq q$, then $f\left( x,u,2\xi
\right) \leq 2^{q}\,f\left( x,u,\xi \right) $. When $f\left( \xi \right)
:=\left\vert \xi \right\vert ^{p}\log \left( 1+\left\vert \xi \right\vert
\right) $ then $f\left( 2\xi \right) \leq c\,2^{p}\,f\left( \xi \right) $
for a positive constant $c$. In fact we can choose $c=2=\sup \left\{ g\left(
t\right) :\;t\in \left( 0,+\infty \right) \right\} $, where $g\left(
t\right) :=\frac{\log \left( 1+2t\right) }{\log \left( 1+t\right) }$; the
constant $c$ is finite since a simple computation shows that $%
\lim_{t\rightarrow 0^{+}}g\left( t\right) =2$ and $\lim_{t\rightarrow
+\infty }g\left( t\right) =1$; finally we observe that $c=2$ since $\log
\left( 1+2t\right) <\log \left( 1+2t+t^{2}\right) =2\log \left( 1+t\right) $
for every $t>0$. If $f\left( x,u,\xi \right) :=\left\vert \xi \right\vert
^{p\left( x,u\right) }$ with $p\left( x,u\right) $ positive and bounded
function in $\Omega \times \mathbb{R}$, then $f$ satisfies (\ref{Delta-two})
with constant $M:=2^{q^{+}}$ and $q^{+}:=\sup \left\{ p\left( x,u\right)
:\;\left( x,u\right) \in \Omega \times \mathbb{R}\right\} $. If $f\left(
x,u,\xi \right) :=\displaystyle\sum_{i=1}^{n}\left\vert \xi \right\vert
^{p_{i}\left( x,u\right) }$ with $1\leq p_{i}\left( x,u\right) \leq const$
for $i=1,2,\ldots ,n$, then the $\Delta _{2}-$condition (\ref{Delta-two}) is
also satisfied. Of course combinations of these examples hold too. While $%
f\left( \xi \right) :=\exp \left( \left\vert \xi \right\vert \right) $, or $%
f\left( \xi \right) :=\exp (\left\vert \xi \right\vert ^{2})$, are not $%
\Delta _{2}-$functions; note that these exponential functions do not satisfy
also the conclusion of Lemma \ref{Lemma 1}; precisely they do not satisfy (%
\ref{gradient growth}). These exponential cases give rise to non-uniformly
elliptic equations and need to be treated with appropriate specific
techniques (see Cellina-Staicu \cite{Cellina-Staicu 2018}, and properly in
this context \cite{DeFilippis-Mingione ARMA 2021},\cite{DiMarco-Marcellini
2020},\cite{Marcellini 1993},\cite{Marcellini JOTA 1996},\cite%
{Marcellini-Nastasi-Pacchiano Camacho 2025}).

More precisely we also consider the \textit{reverse }$\Delta _{2}-$\textit{%
condition with respect to} $\xi \in \mathbb{R}^{n}$, in the form $m\,f\left(
x,u,\xi \right) \leq f\left( x,u,2\xi \right) $; that is, together with (\ref%
{Delta-two}), 
\begin{equation}
m\,f\left( x,u,\xi \right) \leq f\left( x,u,2\xi \right) \leq M\,f\left(
x,u,\xi \right) \,,  \label{Delta-two (two sides)}
\end{equation}%
for constants $M\geq m>1$ and for all $\left( x,u,\xi \right) \in \Omega
\times \mathbb{R}\times \mathbb{R}^{n}$ (of course $M,m$ independent of $%
x,u,\xi $). As discussed in Remark \ref{Remark}, the above examples (\textit{%
double phase, variable exponents, anisotropic case}) satisfy both $\Delta
_{2}-$\textit{inequalities in} (\ref{Delta-two (two sides)}).

\textit{What do we mean with vector field }$a\left( x,u,\xi \right) $\textit{%
\ associated to the energy integral (\ref{energy integral})? }It may happen
that $a\left( x,u,\xi \right) $ is obtained as the \textit{first variation}
of (\ref{energy integral}); i.e., under the usual notation $\tfrac{\partial
f\left( x,u,\xi \right) }{\partial \xi _{i}}=f_{\xi _{i}}\left( x,u,\xi
\right) $ 
\begin{equation}
a=\left( a^{i}\right) _{i=1,\ldots ,n}=\left( f_{\xi _{i}}\right)
_{i=1,\ldots ,n}=D_{\xi }f\,.  \label{first variation condition}
\end{equation}%
In this case we say that the vector field $a=\left( a^{i}\right)
_{i=1,\ldots ,n}$ has a \textit{variational structure}. However, in this
paper our \textit{vector field }$a\left( x,u,\xi \right) $ does not
necessarily satisfy (\ref{first variation condition}). We limit ourselves to
compare the vector field $a\left( x,u,\xi \right) =\left( a^{i}\left(
x,u,\xi \right) \right) _{i=1,\ldots ,n}$ to the energy integrand in this
way 
\begin{equation}
\left\{ 
\begin{array}{l}
\left( a\left( x,u,\xi \right) ,\xi \right) \geq c_{1}\,\left( D_{\xi
}f\left( x,u,\xi \right) ,\xi \right) -c_{2}\left\vert u\right\vert
^{p^{\ast } }-b_{1}\left( x\right) \\ 
\left( a\left( x,u,\xi \right) ,\xi \right) \leq c_{3}\,\left( D_{\xi
}f\left( x,u,\xi \right) ,\xi \right) +c_{4}\left\vert u\right\vert
^{p^{\ast }}+b_{2}\left( x\right)%
\end{array}%
\right.  \label{comparison inequalities}
\end{equation}
for some constants $c_{1},c_{3}>0$, $c_{2},c_{4}\geq 0$, for a.e. $x\in
\Omega $ and every $\left( u,\xi \right) \in \mathbb{R}\times \mathbb{R}^{n}$%
. In (\ref{comparison inequalities}) as usual $\left( \cdot ,\cdot \right) $
denotes the scalar product in{\ $\mathbb{R}^{n}$ and} $p^{\ast }$ is the
Sobolev exponent of $p$; i.e. $p^{\ast }$ is $\tfrac{np}{n-p}$ if $p<n$, and
it can be fixed arbitrarily (greater than $p$) if $p\geq n$. The function $%
b_{1}\left( x\right) $ satisfies the summability conditions $b_{1}\in
L^{s_{1}}\left( \Omega \right) $ for some $s_{1}>1$, while $b_{2}\in
L^{1}\left( \Omega \right) $.

We use the energy integral (\ref{energy integral}) to define the Sobolev
class where to look for solutions. Our weak solutions to the differential
elliptic equation (\ref{differential equation}) are not necessarily
minimizers of the energy integral (\ref{energy integral}).

The \textit{coercivity }is expressed in terms of coercivity of $f:\Omega
\times \mathbb{R}\times \mathbb{R}^{n}\rightarrow \left[ 0,+\infty \right) $ 
\begin{equation}
f\left( x,u,\xi \right) \geq c_{5}\,\left\vert \xi \right\vert ^{p}
\label{coercivity condition}
\end{equation}%
for a positive constant $c_{5}\,$and for a.e. $x\in \Omega $ and every $%
\left( u,\xi \right) \in \mathbb{R}\times \mathbb{R}^{n}$. In terms of $u\in 
\mathbb{R}$ we require 
\begin{equation}
f\left( x,v,\xi \right) \leq c_{6}\,f\left( x,u,\xi \right)
\,,\;\;\;\;\;\forall \;u,v\in \mathbb{R}\;:\;\left\vert v\right\vert \leq
\left\vert u\right\vert \,,  \label{request on the u-dependence}
\end{equation}%
for a positive constant $c_{6}$, a.e. $x\in \Omega $ and for all $\xi \in 
\mathbb{R}^{n}$. We note that in most of the examples considered in
literature, such as for instance (\ref{example 1 in prologue})-(\ref{example
3 in prologue}) and (\ref{double phase}),(\ref{double phase with variable
exponents}), the function $f=f\left( x,\xi \right) $ is independent of the
variable $u\in \mathbb{R}$ and thus condition (\ref{request on the
u-dependence}) is trivially satisfied. Other similar cases, with $f\left(
x,u,\xi \right) $ depending on $u$ too, can be easily deduced by simple
perturbations; for instance, if {$f(x,\cdot ,\xi )$ is decreasing in $%
(-\infty ,0)$ and increasing in $(0,+\infty )$, a situation that happens for
instance with }$f:=g\left( x,\left\vert u\right\vert ,\xi \right) $ and {$%
g(x,\cdot ,\xi )$ increasing. For instance, we can refer to the simple model
cases }%
\begin{equation}
F_{1}\left( u\right) =\int_{\Omega }a\left( x,u\right) \left\vert
Du\right\vert ^{p}\,dx\,,\;\;\;\;\;F_{2}\left( u\right) =\int_{\Omega
}b\left( x,\left\vert u\right\vert \right) \left\vert Du\right\vert
^{p}\,dx\,,  \label{simple model cases}
\end{equation}%
for some $p>1$. In the first model in (\ref{simple model cases}) we have $%
f_{1}\left( x,u,\xi \right) :=a\left( x,u\right) \left\vert \xi \right\vert
^{p}$, where $a:\Omega \times \mathbb{R}\rightarrow \left( 0,+\infty \right) 
$, $m\leq a\left( x,u\right) \leq M$ for some positive constants $m,M$. Then 
$a\left( x,v\right) \leq M\leq \frac{M}{m}\,a\left( x,u\right) $ for all $%
u,v\in \mathbb{R}$; thus assumption (\ref{request on the u-dependence})
holds in the form $f_{1}\left( x,v,\xi \right) \leq \frac{M}{m}f_{1}\left(
x,u,\xi \right) $ for all $u,v$ in $\mathbb{R}$. The function $f_{2}\left(
x,u,\xi\right) :=b\left( x,\left\vert u\right\vert \right)|\xi|^p $ in the
second model in (\ref{simple model cases}) has the coefficient $b:\Omega
\times \left[ 0,+\infty \right) \rightarrow \left[ 0,+\infty \right) $ not
necessarily bounded, not necessarily far from zero (it could also be $%
b\left( x,0\right) =0$ for some $x\in \Omega $), however $b\left( x,\cdot
\right) $ is an increasing function with respect to its second variable;
then it is clear that (\ref{request on the u-dependence}) holds for $%
f_{2}\left( x,u,\xi \right) $ too.

The Carath\'{e}odory function $b\left( x,u,\xi \right) $ on the right-hand
side of (\ref{differential equation}) is defined in $\Omega \times \mathbb{R}%
\times \mathbb{R}^{n}$ and it satisfies the \textit{unilateral growth
conditions}{\ }%
\begin{equation}
\left\{ 
\begin{array}{l}
\operatorname{sign}\left( u\right) \,b\left( x,u,\xi \right) \geq -\,c_{7}\,\big(%
f\left( x,u,\xi \right) ^{1-\frac{1}{p^{\ast }}}+\left\vert \xi \right\vert
^{p+\frac{p}{n}-1}+\left\vert u\right\vert ^{p^{\ast }-1}\big)-b_{3}\left(
x\right) \\ 
u\,b\left( x,u,\xi \right) \leq c_{8}\,\big(f\left( x,u,\xi \right)
+\left\vert u\right\vert ^{p^{\ast }}\big)+b_{4}\left( x\right)%
\end{array}%
\right.  \label{growth condition for b}
\end{equation}%
for some positive constants $c_{i}\,$, for a.e. $x\in \Omega $ and every $%
\left( u,\xi \right) \in \mathbb{R}\times \mathbb{R}^{n}$. The summability
of the {nonnegative functions $b_{3},b_{4}$ }is $b_{3}\in L^{\frac{n}{p}%
+\varepsilon }\left( \Omega \right) $ for some $\varepsilon >0$, while $%
b_{4}\in L^{1}\left( \Omega \right) $. If $p\geq n$, due to the
arbitrariness in the choice of $p^{\ast }$, the exponent $\alpha :=1-\frac{1%
}{p^{\ast }}$ reduces to any fixed real number $\alpha \in \left[ 0,1\right) 
${, while }$p^{\ast }$ {and }$p^{\ast }-1$ to{\ fixed real numbers
arbitrarily large.}

In literature often positive solutions are taken under consideration. Thus
the interest of the \textit{unilateral growth conditions} (\ref{growth
condition for b}) can be seen particularly when ${\operatorname{sgn}}\left( {u\cdot b%
}\right) $ is constant, either greater than or equal to zero, or less than
or equal to zero, so that one of the two inequalities in (\ref{growth
condition for b}) is automatically satisfied. In a more general context it
could be useful to require a single growth for the absolute value of $%
b\left( x,u,\xi \right) $. As shown below in Remark \ref%
{r:boundalternativoperb}, a sufficient condition to (\ref{growth condition
for b}), i.e., a \textit{growth condition} for the right-hand side $b\left(
x,u,\xi \right) $ in (\ref{differential equation}), is 
\begin{equation}
\left\vert b\left( x,u,\xi \right) \right\vert \leq c_{9}\,\big(f\left(
x,u,\xi \right) ^{1-\frac{1}{p^{\ast }}}+\left\vert u\right\vert ^{p^{\ast
}-1}\big)+b_{3}\left( x\right)  \label{growth condition for |b|}
\end{equation}%
for a constants $c_{9}\geq 0$, for a.e. $x\in \Omega $ and every $\left(
u,\xi \right) \in \mathbb{R}\times \mathbb{R}^{n}$. As before, the
summability of $b_{3}$ is $b_{3}\in L^{\frac{n}{p}+\varepsilon }\left(
\Omega \right) $ with $\varepsilon >0$. For the reader's convenience, the
explicit analytic expression of the exponent of $f\left( x,u,\xi \right) $
in (\ref{growth condition for b}) is $1-\frac{1}{p^{\ast }}=1-\big(\frac{1}{p%
}-\frac{1}{n}\big)$. That is, an equivalent formulation of (\ref{growth
condition for |b|}), for $p\leq n$, is 
\begin{equation}
\left\vert b\left( x,u,\xi \right) \right\vert \leq c_{9}\,\big(f\left(
x,u,\xi \right) ^{1-(\frac{1}{p}-\frac{1}{n})}+\left\vert u\right\vert
^{p^{\ast }-1}\big)+b_{3}\left( x\right) \,.
\label{growth condition for |b| - equivalent formulation}
\end{equation}%
\bigskip

We consider here the Dirichlet problem 
\begin{equation}
\left\{ 
\begin{array}{l}
\sum_{i=1}^{n}\frac{\partial }{\partial x_{i}}a^{i}\left( x,u,Du\right)
=b\left( x,u,Du\right) ,\;\;\;\;\;x\in \Omega \,, \\ 
u=u_{0}\;\;\;\;\;\text{on}\;\;\partial \Omega \,,%
\end{array}%
\right.  \label{Dirichlet problem}
\end{equation}%
where $u_{0}:\Omega \rightarrow \mathbb{R}$ is a \textit{bounded} boundary
datum; i.e., $u_{0}\in L^{\infty }\left( \Omega \right) $. As already
described, we consider \textit{weak solutions} $u:\Omega \subset \mathbb{R}%
^{n}\rightarrow \mathbb{R}$ to the differential equation (\ref{Dirichlet
problem})$_{1}$ which solve the Dirichlet problem (\ref{Dirichlet problem}).
These solutions $u$ belong to \textit{the Sobolev class }$W^{1,F}\left(
\Omega \right) $ of functions with \textit{finite energy}, defined by 
\begin{equation}
W^{1,F}\left( \Omega \right) :=\left\{ u\in W^{1,1}\left( \Omega \right)
:\;\;F\left( u\right) :=\int_{\Omega }f\left( x,u,Du\right) \,dx\;<+\infty
\right\} .  \label{the Sobolev class}
\end{equation}%
Similarly we denote by $W_{0}^{1,F}\left( \Omega \right) :=W^{1,F}\left(
\Omega \right) \cap W_{0}^{1,p}\left( \Omega \right) $. In order to have the
zero function in these Sobolev classes $W^{1,F}\left( \Omega \right) $ and $%
W_{0}^{1,F}\left( \Omega \right) $, we require for $f$ the natural \textit{%
summability property} 
\begin{equation}
x\mapsto f\left( x,0,0\right) \in L^{1}\left( \Omega \right) \,.
\label{summability property}
\end{equation}

\begin{theorem}
\label{main boundeness result}Let $f:\Omega \times \mathbb{R}\times \mathbb{R%
}^{n}\rightarrow \left[ 0,+\infty \right) $, $f=f\left( x,u,\xi \right) $,
be a \textit{Carath\'{e}odory energy function, convex} with respect to the
gradient variable $\xi \in \mathbb{R}^{n}$ for a.e. $x\in \Omega $ and all $%
u\in \mathbb{R}$,\textit{\ satisfying }the $\Delta _{2}-$condition (\ref%
{Delta-two (two sides)}), the coercivity (\ref{coercivity condition}) for
some $p>1$, and (\ref{request on the u-dependence}),(\ref{summability
property}). Let $a=a\left( x,u,\xi \right) $, $a:\Omega \times \mathbb{R}%
\times \mathbb{R}^{n}\rightarrow \mathbb{R}^{n}$, be a vector field with a
generalized variational structure, in the sense that it is associated to $f$
through the comparison inequalities (\ref{comparison inequalities}).
Finally, let $u\in u_{0}+W_{0}^{1,F}\left( \Omega \right) $ be a weak
solution to the Dirichlet problem (\ref{Dirichlet problem}), with a boundary
datum $u_{0}\in L^{\infty }(\Omega )\cap W^{1,F}\left( \Omega \right) $ and
right-hand side $b=b\left( x,u,\xi \right) $ satisfying either (\ref{growth
condition for b}) or (\ref{growth condition for |b|}). Then $u$ is globally
bounded in $\overline{\Omega }$.
\end{theorem}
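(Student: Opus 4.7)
The plan is to implement a De Giorgi--Stampacchia level-set iteration adapted to the generalized variational structure. By replacing $u$ with $-u$ (and $b$ with $-b(x,-u,-\xi)$, $a$ with $-a(x,-u,-\xi)$, $u_{0}$ with $-u_{0}$) if necessary, it suffices to bound $u$ from above. Set $k_{0}:=\|u_{0}\|_{L^{\infty}(\Omega)}$ and, for every $k\geq k_{0}$, introduce the truncation $\varphi_{k}:=(u-k)^{+}$ together with the super-level set $A(k):=\{x\in\Omega:u(x)>k\}$. Since $u-u_{0}\in W_{0}^{1,F}(\Omega)$ and $k\geq\|u_{0}\|_{\infty}$, the chain rule combined with the $\Delta_{2}$ condition (\ref{Delta-two (two sides)}) and the monotonicity (\ref{request on the u-dependence}) shows that $\varphi_{k}\in W_{0}^{1,F}(\Omega)$, so it is an admissible test function.

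Testing the weak formulation of (\ref{Dirichlet problem})$_{1}$ against $\varphi_{k}$ gives the Caccioppoli-type identity
\begin{equation*}
\int_{A(k)}\bigl(a(x,u,Du),Du\bigr)\,dx \;=\; -\int_{A(k)}b(x,u,Du)(u-k)\,dx .
\end{equation*}
To lower-bound the left-hand side, I combine the first inequality in (\ref{comparison inequalities}) with the pointwise relation $\bigl(D_{\xi}f(x,u,\xi),\xi\bigr)\geq\lambda\,f(x,u,\xi)$ for some $\lambda>0$. The latter follows from convexity of $f(x,u,\cdot)$ at $\xi/2$, giving $f(x,u,\xi)-f(x,u,\xi/2)\leq\tfrac12\bigl(D_{\xi}f(x,u,\xi),\xi\bigr)$, together with the reverse $\Delta_{2}$-inequality $f(x,u,\xi/2)\leq f(x,u,\xi)/m$, producing $\lambda=2(m-1)/m>0$. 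This yields
\begin{equation*}
\int_{A(k)}(a,Du)\,dx \;\geq\; c_{1}\lambda \int_{A(k)} f(x,u,Du)\,dx \;-\; c_{2}\int_{A(k)}|u|^{p^{\ast}}\,dx \;-\;\int_{A(k)} b_{1}\,dx .
\end{equation*}

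On the right-hand side I use the unilateral growth (\ref{growth condition for b}), noting $\operatorname{sgn}(u)=1$ on $A(k)$. The term $|Du|^{p+p/n-1}=|Du|^{p(1-1/p^{\ast})}$ is controlled by coercivity (\ref{coercivity condition}) as $c_{5}^{-(1-1/p^{\ast})}f^{1-1/p^{\ast}}$, so it merges with the $f^{1-1/p^{\ast}}$ term. Young's inequality with exponents $p^{\ast}/(p^{\ast}-1)$ and $p^{\ast}$ absorbs $f^{1-1/p^{\ast}}(u-k)$ into $\varepsilon f+C_{\varepsilon}(u-k)^{p^{\ast}}$; after absorbing the $\varepsilon f$-term into the left-hand side and using coercivity plus the Sobolev inequality $\|\varphi_{k}\|_{L^{p^{\ast}}}^{p}\leq C\int_{A(k)}|Du|^{p}\,dx\leq C'\int_{A(k)}f(x,u,Du)\,dx$, one arrives at
\begin{equation*}
\|\varphi_{k}\|_{L^{p^{\ast}}(\Omega)}^{p}\;\leq\;C\Big(\int_{A(k)}(u-k)^{p^{\ast}}+\int_{A(k)}|u|^{p^{\ast}}+\int_{A(k)}b_{3}(u-k)+\int_{A(k)}(b_{1}+b_{4})\Big).
\end{equation*}
Writing $|u|^{p^{\ast}}\leq 2^{p^{\ast}}\bigl((u-k)^{p^{\ast}}+k^{p^{\ast}}\bigr)$ on $A(k)$ and applying Hölder against $b_{3}\in L^{n/p+\varepsilon}$ produces a measure gain $|A(k)|^{\delta}$ with $\delta=\delta(\varepsilon)>0$. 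For $h>k\geq k_{0}$ the bound $(h-k)^{p^{\ast}}|A(h)|\leq\|\varphi_{k}\|_{L^{p^{\ast}}}^{p^{\ast}}$ then leads to a Stampacchia-type recursion of the form
\begin{equation*}
|A(h)|^{p/p^{\ast}}\;\leq\;\frac{C\bigl(k^{p^{\ast}}+1\bigr)\,|A(k)|^{1+\eta}}{(h-k)^{p}}
\end{equation*}
for some $\eta>0$ depending on $\varepsilon$ and $n,p$.

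The decisive obstacle is the \emph{critical} exponent $|u|^{p^{\ast}-1}$ appearing in (\ref{growth condition for b}): its power matches the Sobolev embedding exactly and it cannot be absorbed directly as the $f^{1-1/p^{\ast}}$ term was. To overcome this I exploit the absolute continuity $\int_{A(k)}|u|^{p^{\ast}}\to 0$ as $k\to\infty$ (which holds because $u\in L^{p^{\ast}}(\Omega)$, a consequence of $u-u_{0}\in W_{0}^{1,F}(\Omega)$, coercivity, and Sobolev) to make the prefactor multiplying $\|\varphi_{k}\|_{L^{p^{\ast}}}^{p^{\ast}}$ arbitrarily small once $k$ is taken sufficiently large; this allows an additional absorption into the left-hand side before the iteration is started. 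With this initial choice of $k$, the classical Stampacchia lemma (any nonincreasing $\psi:[k_{0},\infty)\to[0,\infty)$ satisfying $\psi(h)\leq C(h-k)^{-\alpha}\psi(k)^{1+\eta}$ vanishes past a finite threshold) applied to $\psi(k):=|A(k)|$ yields $|A(k_{\infty})|=0$ for a finite $k_{\infty}\geq k_{0}$, i.e. $u\leq k_{\infty}$ a.e. The symmetric argument for $-u$ completes the proof of global boundedness.
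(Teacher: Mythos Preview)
Your strategy coincides with the paper's almost step by step: the same truncation test function (the paper uses the two-sided version $(|u|-k)_{+}\operatorname{sgn}(u)$, avoiding the symmetry reduction), the same lower bound $\bigl(D_{\xi}f,\xi\bigr)\geq 2(1-\tfrac{1}{m})f$ obtained from convexity plus the reverse $\Delta_{2}$-inequality, the same Young-absorption of $f^{1-1/p^{\ast}}(u-k)$, and the same appeal to absolute continuity to start the iteration at a high level. A minor slip: $b_{4}$ should not appear in the Caccioppoli inequality --- only the first line of (\ref{growth condition for b}) is used there, the second line being needed only for well-posedness of the pairing.

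There is, however, a gap in your final recursion. After the splitting $|u|^{p^{\ast}}\leq 2^{p^{\ast}}\bigl((u-k)^{p^{\ast}}+k^{p^{\ast}}\bigr)$ the residual term $k^{p^{\ast}}|A(k)|$ contributes only the power $|A(k)|^{1}$ (so your exponent ``$1+\eta$'' is not right; the $b_{1},b_{3}$ terms give even smaller powers, though still $>p/p^{\ast}$, which is what matters). More seriously, the prefactor $k^{p^{\ast}}$ grows without bound, and the ``classical Stampacchia lemma'' you invoke requires the constant in $\psi(h)\leq C(h-k)^{-\alpha}\psi(k)^{\beta}$ to be independent of $k$. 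Your absorption of $\|\varphi_{k}\|_{L^{p^{\ast}}}^{p^{\ast}}$ via the smallness of $\|\varphi_{k}\|_{L^{p^{\ast}}}^{p^{\ast}-p}$ is correct, but it does not touch this additive $k^{p^{\ast}}|A(k)|$ term.

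The paper handles exactly this point by switching to a De Giorgi dyadic scheme: with $k_{h}=d(1-2^{-h-1})$ and $J_{h}=\int_{A_{k_{h}}}|Du|^{p}$, one has $k_{h}\leq d$ for all $h$, so $k_{h}^{p^{\ast}}\leq d^{p^{\ast}}$; this factor cancels exactly against the $d^{-p^{\ast}}$ in the level-measure estimate $|A_{k_{h+1}}|\leq c(2^{h}/d)^{p^{\ast}}J_{h}^{p^{\ast}/p}$, producing a clean recursion $J_{h+1}\leq L\,2^{p^{\ast}h}J_{h}^{1+\delta}$ with $L$ independent of $d$ and $h$. The initial smallness $J_{0}\leq L^{-1/\delta}2^{-p^{\ast}/\delta^{2}}$ is then secured by taking $d$ large, using absolute continuity of $\int|Du|^{p}$ rather than of $\int|u|^{p^{\ast}}$. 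Replacing your Stampacchia step with this dyadic iteration closes the argument.
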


\bigskip

The proof of Theorem \ref{main boundeness result} is given in Section \ref%
{Section: Proof}. The global boundedness result of Theorem \ref{main
boundeness result} holds for most of examples of weak solutions to elliptic
problems under general growth conditions considered in the literature. Some
details and examples follow below.

\bigskip

Sometimes a \textit{uniform bound} of the $L^{\infty }\left( \Omega \right)
- $norm of weak solutions in terms of data is useful; for instance this
happens for approximating sequences of weak solutions, when a uniform bound
usually gives rise to a compactness property of the sequence. Essentially
with the same proof of Theorem \ref{main boundeness result} we obtain a
uniform estimate of the $L^{\infty }\left( \Omega \right) -$norm of $u$ in
terms of the data if we slightly restrict the stated assumptions in the
following way.

We can compare the vector field $a\left( x,u,\xi \right) =\left( a^{i}\left(
x,u,\xi \right) \right) _{i=1,\ldots ,n}$ to the energy integrand in this
way 
\begin{equation}
\left\{ 
\begin{array}{l}
\left( a\left( x,u,\xi \right) ,\xi \right) \geq c_{1}\,\left( D_{\xi
}f\left( x,u,\xi \right) ,\xi \right) -c_{2}\left\vert u\right\vert
^{p^{\ast }-\varepsilon }-b_{1}\left( x\right) \\ 
\left( a\left( x,u,\xi \right) ,\xi \right) \leq c_{3}\,\left( D_{\xi
}f\left( x,u,\xi \right) ,\xi \right) +c_{4}\left\vert u\right\vert
^{p^{\ast }}+b_{2}\left( x\right)%
\end{array}%
\right.  \label{comparison inequalities with epsilon}
\end{equation}%
for a fixed positive $\varepsilon \in \mathbb{R}$, for some constants $%
c_{1},c_{3}>0$, $c_{2},c_{4}\geq 0$, for a.e. $x\in \Omega $ and every $%
\left( u,\xi \right) \in \mathbb{R}\times \mathbb{R}^{n}$. Moreover, the
Carath\'{e}odory function $b\left( x,u,\xi \right) $ in right-hand side of (%
\ref{differential equation}) is defined in $\Omega \times \mathbb{R}\times 
\mathbb{R}^{n}$ and it satisfies the \textit{unilateral growth conditions}{\ 
}%
\begin{equation}
\left\{ 
\begin{array}{l}
\operatorname{sign}\left( u\right) \,b\left( x,u,\xi \right) \geq -\,c_{7}\,\big(%
f\left( x,u,\xi \right) ^{1-\frac{1}{p^{\ast }}-\varepsilon }+\left\vert \xi
\right\vert ^{p+\frac{p}{n}-1-\varepsilon }+\left\vert u\right\vert
^{p^{\ast }-1-\varepsilon }\big)-b_{3}\left( x\right) \\ 
u\,b\left( x,u,\xi \right) \leq c_{8}\,\big(f\left( x,u,\xi \right)
+\left\vert u\right\vert ^{p^{\ast }}\big)+b_{4}\left( x\right)%
\end{array}%
\right.  \label{growth condition for b with epsilon}
\end{equation}%
for some $\varepsilon \in \left( 0,1\right) $ and positive constants $%
c_{i}\, $, for a.e. $x\in \Omega $ and every $\left( u,\xi \right) \in 
\mathbb{R}\times \mathbb{R}^{n}$. As before, the summability of the {%
nonnegative functions $b_{3},b_{4}$ }is $b_{3}\in L^{\frac{n}{p}+\varepsilon
}\left( \Omega \right) $, while $b_{4}\in L^{1}\left( \Omega \right) $.
Again, a sufficient condition to (\ref{growth condition for b with epsilon}%
), i.e., a \textit{growth condition} for the right-hand side $b\left(
x,u,\xi \right) $ in (\ref{differential equation}), is 
\begin{equation}
\left\vert b\left( x,u,\xi \right) \right\vert \leq c_{9}\,\big(f\left(
x,u,\xi \right) ^{1-\frac{1}{p^{\ast }}-\varepsilon }+\left\vert
u\right\vert ^{p^{\ast }-1-\varepsilon }\big)+b_{3}\left( x\right)
\label{growth condition for |b| with epsilon}
\end{equation}%
for some $\varepsilon \in \left( 0,1\right) $, a constants $c_{9}\geq 0$,
for a.e. $x\in \Omega $ and every $\left( u,\xi \right) \in \mathbb{R}\times 
\mathbb{R}^{n}$. The summability of $b_{3}$ as before is $b_{3}\in L^{\frac{n%
}{p}+\varepsilon }\left( \Omega \right) $.

Under these conditions we obtain the global boundedness of weak solutions
with a uniform estimate of the $L^{\infty }\left( \Omega \right) -$norm in
terms of the data. In Section \ref{s:dimconepsilon} we provide details on
how to derive the proof of Theorem \ref{main boundeness result with epsilon}
from that of Theorem \ref{main boundeness result}.

\begin{theorem}
\label{main boundeness result with epsilon}Let $f:\Omega \times \mathbb{R}%
\times \mathbb{R}^{n}\rightarrow \left[ 0,+\infty \right) $, $f=f\left(
x,u,\xi \right) $, be a \textit{Carath\'{e}odory energy function, convex}
with respect to the gradient variable $\xi \in \mathbb{R}^{n}$ for a.e. $%
x\in \Omega $ and all $u\in \mathbb{R}$,\textit{\ satisfying }the $\Delta
_{2}$ condition (\ref{Delta-two (two sides)}), the coercivity (\ref%
{coercivity condition}) for some $p>1$, and (\ref{request on the
u-dependence}),(\ref{summability property}). Let $a=a\left( x,u,\xi \right) $%
, $a:\Omega \times \mathbb{R}\times \mathbb{R}^{n}\rightarrow \mathbb{R}^{n}$%
, be a vector field with a generalized variational structure, in the sense
that it is associated to $f$ through the comparison inequalities (\ref%
{comparison inequalities with epsilon}). Let $u\in u_{0}+W_{0}^{1,F}\left(
\Omega \right) $ be a weak solution to the Dirichlet problem (\ref{Dirichlet
problem}), with a boundary datum $u_{0}\in L^{\infty }(\Omega )\cap
W^{1,F}\left( \Omega \right) $ and right-hand side $b=b\left( x,u,\xi
\right) $ satisfying either (\ref{growth condition for b with epsilon}) or (%
\ref{growth condition for |b| with epsilon}). Then $u$ is globally bounded
in $\overline{\Omega }$ and there exists $\gamma \geq 1$ such that 
\begin{equation}
\Vert u\Vert _{L^{\infty }(\Omega )}\leq c\left( 1+\Vert u_{0}\Vert
_{L^{\infty }(\Omega )}\right) \cdot \left( 1+\Vert u\Vert _{L^{p^{\ast
}}(\Omega )}\right) ^{\gamma },
\label{estimate in the main theorem with epsilon}
\end{equation}%
where the constant $c$ depends on the data, it depends on $\varepsilon $
too, but it is independent of $u$.
\end{theorem}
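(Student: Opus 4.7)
My approach is a De Giorgi--Stampacchia iteration on the super-level sets of $|u|$, adapted to the generalized variational framework, in which the $\varepsilon$-slack built into (\ref{comparison inequalities with epsilon}) and (\ref{growth condition for b with epsilon})--(\ref{growth condition for |b| with epsilon}) is used precisely to upgrade the qualitative boundedness given by Theorem \ref{main boundeness result} to the quantitative bound (\ref{estimate in the main theorem with epsilon}). Roughly, setting $\varepsilon=0$ puts every exponent appearing in the growth hypotheses at a critical Sobolev value, while $\varepsilon>0$ creates a uniform gain that Stampacchia's lemma converts into an explicit constant.

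Set $k_{0}:=\Vert u_{0}\Vert _{L^{\infty }(\Omega )}$. For each $k\geq k_{0}$ the truncation $v_{k}:=(|u|-k)_{+}\operatorname{sgn}(u)$ lies in $W_{0}^{1,F}(\Omega )$: its trace vanishes because $|u|=|u_{0}|\leq k$ on $\partial \Omega $, and the finite-energy property follows from (\ref{request on the u-dependence}) and (\ref{summability property}) since $|v_{k}|\leq |u|$ and $Dv_{k}=Du\,\chi _{A(k)}$, where $A(k):=\{x\in \Omega :\,|u(x)|>k\}$. Testing the equation against $v_{k}$ produces
$$\int_{A(k)}\bigl(a(x,u,Du),Du\bigr)\,dx=\int_{A(k)}b(x,u,Du)\,v_{k}\,dx.$$
For the left-hand side, I combine (\ref{comparison inequalities with epsilon})$_{1}$ with Lemma \ref{Lemma 1} (whose content is essentially the convexity plus $\Delta _{2}$ inequality $(D_{\xi }f(x,u,\xi ),\xi )\geq cf(x,u,\xi )-f(x,u,0)$) and the coercivity (\ref{coercivity condition}), obtaining a lower bound of the shape $c\int_{A(k)}|Du|^{p}\,dx$ minus subcritical terms of order $|u|^{p^{\ast }-\varepsilon }$ and summable data. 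For the right-hand side I use either (\ref{growth condition for b with epsilon}) or (\ref{growth condition for |b| with epsilon}) together with $|v_{k}|\leq |u|$ and $\operatorname{sgn}(v_{k})=\operatorname{sgn}(u)$ on $A(k)$; each resulting term is split by H\"older so that the factor involving $|Du|^{p}$ is absorbed on the left, while the factor involving $|v_{k}|$ is controlled via the Sobolev embedding $W_{0}^{1,p}(\Omega )\hookrightarrow L^{p^{\ast }}(\Omega )$ applied to $v_{k}$.

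Combining these two estimates and exploiting the elementary inequality $(h-k)^{p^{\ast }}|A(h)|\leq \int_{A(k)}|v_{k}|^{p^{\ast }}\,dx$ for $h>k$, I obtain a Stampacchia-type level-set inequality
$$|A(h)|\leq \frac{C\bigl(1+\Vert u_{0}\Vert _{L^{\infty }(\Omega )}\bigr)^{\alpha }\bigl(1+\Vert u\Vert _{L^{p^{\ast }}(\Omega )}\bigr)^{\beta }}{(h-k)^{p^{\ast }}}\,|A(k)|^{1+\tilde{\delta}},\qquad h>k\geq k_{0},$$
for some $\tilde{\delta}>0$ that is supplied precisely by the $\varepsilon$. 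The classical Stampacchia iteration lemma then furnishes $|A(k_{0}+d)|=0$ with $d\leq C(1+\Vert u_{0}\Vert _{L^{\infty }(\Omega )})(1+\Vert u\Vert _{L^{p^{\ast }}(\Omega )})^{\gamma }$, which is exactly (\ref{estimate in the main theorem with epsilon}).

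The main obstacle is the bookkeeping in the second step: the subcritical terms $|u|^{p^{\ast }-\varepsilon }$ from (\ref{comparison inequalities with epsilon})$_{1}$ together with $f^{1-1/p^{\ast }-\varepsilon }$, $|\xi |^{p+p/n-1-\varepsilon }$, $|u|^{p^{\ast }-1-\varepsilon }$ from (\ref{growth condition for b with epsilon}) must each be split by H\"older with exponents chosen so that (i) the factor pairing with $|Du|^{p}$ or with $\Vert v_{k}\Vert _{L^{p^{\ast }}}^{p}$ can be absorbed, (ii) the residual produces a strictly positive power $|A(k)|^{\tilde{\delta}}$ with a $\tilde{\delta}$ independent of $k$, and (iii) the remaining $L^{p^{\ast }}$-mass of $u$ collapses into a single $(1+\Vert u\Vert _{L^{p^{\ast }}})^{\gamma }$ factor. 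For $\varepsilon =0$ the exponents are borderline, $\tilde{\delta}$ vanishes, and only qualitative arguments (absolute continuity of the integrals) can conclude, as in Theorem \ref{main boundeness result}; the strictly positive $\varepsilon $ of Theorem \ref{main boundeness result with epsilon} creates a uniform gain in every slot, which is exactly what Stampacchia's lemma demands to output an explicit threshold $d$, and hence the explicit estimate (\ref{estimate in the main theorem with epsilon}).
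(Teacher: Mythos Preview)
Your plan is essentially the paper's own argument: the same test function $v_k=(|u|-k)_+\operatorname{sgn}(u)\in W_0^{1,F}(\Omega)$, the same Caccioppoli estimate obtained by pairing (\ref{comparison inequalities with epsilon})$_1$ with the \emph{lower} bound on $(D_\xi f,\xi)$ from Lemma~\ref{Lemma 2} (not Lemma~\ref{Lemma 1}, which only gives the upper bound; there is no $-f(x,u,0)$ correction), and the same level-set iteration. The paper packages the iteration in the De~Giorgi form with geometric levels $k_h=d(1-2^{-h-1})$ and the recursive quantity $J_h:=\int_{A_{k_h}}(|u|-k_h)^{p^*}\,dx$, while you use the Stampacchia form with continuous levels; these are interchangeable. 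Your reading of the role of $\varepsilon$ is also correct: it produces the strictly positive gap $\sigma:=p^*-\max\{\theta,s,\dots\}>0$, hence a factor $d^{-\sigma p^*/p}$ in the recursion, which is precisely what turns the qualitative conclusion of Theorem~\ref{main boundeness result} into the explicit estimate (\ref{estimate in the main theorem with epsilon}).

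One technical correction: the Stampacchia inequality you display, with decay function $|A(\cdot)|$, is not attainable under the full range of exponents allowed here. After bounding $\|v_k\|_{L^{p^*}}\le\|u\|_{L^{p^*}}$ one arrives at $|A(h)|\le C(h-k)^{-p^*}|A(k)|^{\sigma/p}$, and $\sigma/p>1$ would force $\max\{\theta,s,\dots\}<p^*-p$, which the hypotheses do not impose (they only ensure $\sigma>0$; e.g.\ $\theta=p^*-\varepsilon$ is admissible with $\varepsilon$ small). The fix---exactly what the paper does---is to iterate on $\psi(k):=\int_{A(k)}(|u|-k)^{p^*}\,dx$ instead: keeping the $\|v_h\|_{L^{p^*}}$-factors in the Caccioppoli bound and using $|A(h)|\le\psi(k)/(h-k)^{p^*}$ yields
\[
\psi(h)\le C\big(1+\|u\|_{L^{p^*}}\big)^{\beta}(h-k)^{-\sigma p^*/p}\,\psi(k)^{(p^*/p)(1-\max\{1/s_1,1/s_3\})},
\]
whose exponent exceeds $1$ simply because $s_1,s_3>n/p$. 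Stampacchia's lemma then gives an explicit $d$ with $\psi(k_0+d)=0$, and $\|u\|_{L^\infty}\le k_0+d$ is (\ref{estimate in the main theorem with epsilon}).
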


\bigskip

The explicit expression of the exponent $\gamma $ in (\ref{estimate in the
main theorem with epsilon}) is 
\begin{equation}
\gamma :=\frac{p^{\ast }-p}{p^{\ast }-\max \left\{ \tfrac{1}{1-\alpha };\;%
\tfrac{p}{p-r+1};\,\theta ;\;s;\;\tfrac{p^{\ast }}{s_{1}};\;\tfrac{p^{\ast }%
}{s_{3}}+1\right\} }\,,  \label{explicit expression of gamma}
\end{equation}%
where the parameters $\alpha ,r,\theta ,s,s_{1},s_{3}$ are defined in
Section \ref{s:dimconepsilon}.

\bigskip

\begin{remark}
\label{Remark in the Introduction}As described in the previous section, when
compared with the known assumptions in similar contexts, our ellipticity and
growth conditions (\ref{comparison inequalities}),(\ref{coercivity condition}%
),(\ref{growth condition for b}) seem to be more general. As explained in
the previous Section, we consider here a large class of Dirichlet problems
which has the double phase case as an example. In this context me mention an
interesting recent \textit{global boundedness result} due to Ho-Winkert \cite%
[Theorems 4.2 and 5.1]{Ho-Winkert 2023}, specific for the double phase
operator as in (\ref{double phase with variable exponents}), which involves
two exponents $p,q$ under the bounds $1<\frac{q}{p}<1+\frac{1}{n}$. On the
contrary in this manuscript, in the specific double phase case, we do not
require a bound on the ratio $\frac{q}{p}$.
\end{remark}

\begin{example}
\label{Example related to the double phase}We consider weak solutions in the
Sobolev class 
\begin{equation*}
u\in W^{1,1}\left( \Omega \right) :\;\;F\left( u\right) :=\int_{\Omega
}\left\{ \tfrac{1}{p}\left\vert Du\right\vert ^{p\left( x\right) }+\tfrac{1}{%
q}a\left( x\right) \left\vert Du\right\vert ^{q\left( x\right) }\right\}
\,dx\;<+\infty
\end{equation*}%
to the Dirichlet problem for the double phase 
\begin{equation}
\left\{ 
\begin{array}{l}
\sum_{i=1}^{n}\tfrac{\partial }{\partial x_{i}}\left\{ \left( \left\vert
Du\right\vert ^{p\left( x\right) -2}+a\left( x\right) \left\vert
Du\right\vert ^{q\left( x\right) -2}\right) u_{x_{i}}\right\} =b\left(
x,u,Du\right) ,\;\;\;\;\;x\in \Omega \,, \\ 
u=u_{0}\;\;\;\;\;\text{on}\;\;\partial \Omega \,.%
\end{array}%
\right.  \label{Dirichlet problem for the double phase}
\end{equation}%
Here $a,p,q:\Omega \subset \mathbb{R}^{n}\rightarrow \mathbb{R}$ are
measurable \textit{bounded functions,} $a\left( x\right) \geq 0$ and $%
p\left( x\right) ,q\left( x\right) \geq p$ \ a.e. in $\Omega $ for a
constant $p>1$. It is not necessary to assume continuity of $a\left(
x\right) ,p\left( x\right) ,q\left( x\right) $, nor that $p\left( x\right)
\leq q\left( x\right) $ a.e. in $\Omega $. The class of the boundary datum
is $u_{0}\in L^{\infty }(\Omega )\cap W^{1,F}\left( \Omega \right) $; the
right-hand side $b=b\left( x,u,\xi \right) $ takes of one of these
forms:\smallskip \newline
\textit{(i)} $b:=b\left( x\right) \in L^{\frac{n}{p}+\varepsilon }\left(
\Omega \right) \,$;\newline
\textit{(ii)} $b:=b\left( x,u\right) $, with $\left\vert b\left( x,u\right)
\right\vert \leq c(x)\,\left( 1+\left\vert u\right\vert ^{p^{\ast
}-1}\right) $ and $c(x)$ bounded;\newline
\textit{(iii)} $b:=b\left( x,\xi \right) $, with $\left\vert b\left( x,\xi
\right) \right\vert \leq \operatorname{const}\left\{ \left\vert \xi \right\vert
^{p\left( x\right) }+a\left( x\right) \left\vert \xi \right\vert ^{q\left(
x\right) }\right\} ^{1-\frac{1}{p}+\frac{1}{n}}$;\newline
\textit{(iv)} or, finally, as a sum of some or all these cases, as either in
(\ref{growth condition for |b|}) or in (\ref{growth condition for |b| -
equivalent formulation}).

Then, as a consequence of Theorem \ref{main boundeness result}, every weak
solution $u\in u_{0}+W_{0}^{1,F}\left( \Omega \right) $ is globally bounded
in $\overline{\Omega }$. Similar conclusion in \textit{(ii)} if $\left\vert
b\left( x,u\right) \right\vert \leq c(x)\,\left( 1+\left\vert u\right\vert
^{s}\right) $ with a lower summability properties for $c(x)$ in dependence
of the exponent $s<p^{\ast }-1$. If $u$ is a weak solution of (\ref%
{Dirichlet problem for the double phase}) and if, for instance, $ub$ is a 
\textit{nonnegative }in $\overline{\Omega }$, then we can use the unilateral
growth condition for $b\left( x,u,\xi \right) $ in (\ref{growth condition
for b}).
\end{example}

\begin{example}
\label{Example related to the double phase with u dependence}This is a
variation of Example \ref{Example related to the double phase}. With similar
notations we consider weak solutions to the Dirichlet problem for the
generalized double phase 
\begin{equation}
\begin{array}{l}
\sum_{i=1}^{n}\tfrac{\partial }{\partial x_{i}}\left\{ \left( \alpha \left(
x,u\right) \left\vert Du\right\vert ^{p\left( x,\left\vert u\right\vert
\right) -2}+\beta \left( x,u\right) \left\vert Du\right\vert ^{q\left(
x,\left\vert u\right\vert \right) -2}\right) u_{x_{i}}\right\} \\ 
=b\left( x,u,Du\right) ,\;\;\;\forall \;x\in \Omega ,%
\end{array}
\label{Dirichlet problem for generalized double phase}
\end{equation}%
with the boundary condition $u\left( x\right) =u_{0}\left( x\right) $ for
all $x\in \partial \Omega $. Here $\alpha ,\beta :\Omega \times \mathbb{R}%
\subset \mathbb{R}^{n+1}\rightarrow \mathbb{R}$ are \textit{Carath\'{e}odory
bounded functions, such that }$0<c_{1}\leq \alpha \left( x,u\right) \leq
c_{2}$ and $\beta \left( x,u\right) \geq 0$, for a.e. $x\in \Omega $, all $%
u\in \mathbb{R}$ and for some positive constants $c_{1}$,$c_{2}$. Moreover $%
q\left( x,\left\vert u\right\vert \right) \geq p\left( x,\left\vert
u\right\vert \right) \geq p$ for a constant $p>1$ and $p\left( x,\cdot
\right) ,q\left( x,\cdot \right) $ are increasing in $\left[ 0,+\infty
\right) $ for a.e. $x\in \Omega $.

The function $\beta $ \textit{satisfies the analogous of (\ref{request on
the u-dependence}); i.e., }$\beta \left( x,v\right) \leq c\,\beta \left(
x,u\right) $ for all $u,v\in \mathbb{R}$ such that $\left\vert v\right\vert
\leq \left\vert u\right\vert $\textit{. For instance this happens if }$\beta
\left( x,u\right) :=\gamma \left( x\right) \delta \left( u\right) $\textit{,
with } $\gamma \geq 0$ measurable bounded function a.e. in $\Omega $ and $%
\delta $ continuous function in $\mathbb{R}$\textit{\ with one of these
properties:}\newline
\textit{(j) either} $0<c_{3}\leq \delta \left( u\right) \leq c_{4}$ for some
constants $c_{3},c_{4}$ and for all $u\in \mathbb{R}$;\newline
\textit{(jj) or} $\delta :=g\left( \left\vert u\right\vert \right) $ and $g:%
\left[ 0,+\infty \right) \rightarrow \left[ 0,+\infty \right) $ is
increasing.

The boundary datum $u_{0}\in L^{\infty }(\Omega )\cap W^{1,F}\left( \Omega
\right) $, where $F$ is the Sobolev class of functions $u\in W^{1,1}\left(
\Omega \right) $ such as $F\left( u\right) :=\int_{\Omega }f\left(
x,u,Du\right) \,dx\;<+\infty $ and 
\begin{equation}
f\left( x,u,Du\right) :=\tfrac{1}{p}\alpha \left( x,u\right) \left\vert
Du\right\vert ^{p\left( x,\left\vert u\right\vert \right) }+\tfrac{1}{q}%
\beta \left( x,u\right) \left\vert Du\right\vert ^{q\left( x,\left\vert
u\right\vert \right) }.  \label{Energy integrand in the example}
\end{equation}%
Since $\alpha \left( x,v\right) \leq c_{2}\leq \frac{c_{2}}{c_{1}}\alpha
\left( x,u\right) $ and similarly (we limit to condition (j) for $\delta $) $%
\delta \left( v\right) \leq \frac{c_{4}}{c_{3}}\delta \left( u\right) $ for
all $u,v\in \mathbb{R}$, then assumption (\ref{request on the u-dependence}) 
$f\left( x,v,Du\right) \leq c\,f\left( x,u,Du\right) $ for $\left\vert
v\right\vert \leq \left\vert u\right\vert $ is satisfied with $c:=\max
\left\{ c_{2}/c_{1};c_{4}/c_{3}\right\} $. Under (jj), condition (\ref%
{request on the u-dependence}) holds with $c:=\max \left\{
c_{2}/c_{1};1\right\} $.

If the right-hand side $b=b\left( x,u,\xi \right) $ is given as in one of
cases (i),(ii),(iii),\textit{(iv)} of the previous Example \ref{Example
related to the double phase} (of course in (iii) $\left\vert b\left( x,u,\xi
\right) \right\vert $ is bounded by the power of the energy integrand (\ref%
{Energy integrand in the example})) then, by Theorem \ref{main boundeness
result}, every weak solution $u\in u_{0}+W_{0}^{1,F}\left( \Omega \right) $
to the Dirichlet problem (\ref{Dirichlet problem for generalized double
phase}) is globally bounded in $\overline{\Omega }$.
\end{example}

\begin{example}
A further variation of the previous Examples \ref{Example related to the
double phase} and \ref{Example related to the double phase with u dependence}
is a Dirichlet problem of the following type 
\begin{equation}
\left\{ 
\begin{array}{l}
\sum_{i=1}^{n}\tfrac{\partial }{\partial x_{i}}\left\{ \left( \alpha \left(
x,u\right) f_{\xi _{i}}\left( x,u,Du\right) +\beta \left( x,u\right) g_{\xi
_{i}}\left( x,u,Du\right) \right) \right\} =b\left( x,u,Du\right) ,\;x\in
\Omega , \\ 
u=u_{0}\;\;\;\;\;\text{on}\;\;\partial \Omega \,,%
\end{array}%
\right.  \label{Dirichlet problem for a general case}
\end{equation}%
with $\alpha ,\beta :\Omega \times \mathbb{R}\subset \mathbb{R}%
^{n+1}\rightarrow \mathbb{R}$ \textit{Carath\'{e}odory functions as in
Example \ref{Example related to the double phase with u dependence}; i.e., }$%
0<c_{1}\leq \alpha \left( x,u\right) \leq c_{2}$ and $\beta \left(
x,u\right) \geq 0$ a.e. in $\Omega $, as either in \textit{(j) of (jj)}
before. A special simple case happens if $\beta \left( x,u\right) $ is
identically equal to zero and $\alpha \left( x,u\right) $ is identically
equal to one, so that the elliptic differential operator in the left-hand
side of (\ref{Dirichlet problem for a general case}) - in this case with $%
\beta \equiv 0$ and $\alpha \equiv 1$ - reduces to $\sum_{i=1}^{n}\tfrac{%
\partial }{\partial x_{i}}f_{\xi _{i}}\left( x,u,Du\right) =b\left(
x,u,Du\right) $.

Here $D_{\xi }f=\left( f_{\xi _{i}}\right) _{i=1,\ldots ,n}$ and $D_{\xi
}g=\left( g_{\xi _{i}}\right) _{i=1,\ldots ,n}$ respectively are the
gradients, with respect to the variable $\xi \in \mathbb{R}^{n}$, of the 
\textit{Carath\'{e}odory energy} functions $f\left( x,u,\xi \right) $ and $%
g\left( x,u,\xi \right) $, both convex with respect to $\xi \in \mathbb{R}%
^{n}$, satisfying the stated assumptions (\ref{Delta-two (two sides)}),(\ref%
{request on the u-dependence}),(\ref{summability property}) and the
coercivity (\ref{coercivity condition}) for the same exponent $p>1$. This
does not necessarily imply that $f$ and $g$ have the same growth as $%
\left\vert \xi \right\vert \rightarrow +\infty $; for example, it could be
that $g$ behaves like $f$ by a logarithm, or the growth of $g$ as $%
\left\vert \xi \right\vert \rightarrow +\infty $ is a power larger than $%
\left\vert \xi \right\vert ^{p}$, etc.

The right-hand side $b\left( x,u,\xi \right) $ in (\ref{Dirichlet problem
for a general case}) does not necessarily depend on derivatives of $f$ and $%
g $ with respect to $u\in \mathbb{R}$, but it satisfies the growth stated
either in (\ref{growth condition for b}) or in (\ref{growth condition for
|b|}); for instance, in this case (\ref{growth condition for |b|}) reads 
\begin{equation*}
\left\vert b\left( x,u,\xi \right) \right\vert \leq c\,\big(\left( \alpha
f+\beta g\right) ^{1-\frac{1}{p^{\ast }} }+\left\vert u\right\vert ^{p^{\ast
}-1 }\big)+b_{0}\left( x\right)
\end{equation*}%
with $b_{0}\in L^{\frac{n}{p}+\varepsilon }\left( \Omega \right) $ for some $%
\varepsilon>0$, a constant $c>0$, for a.e. $x\in \Omega $ and every $\left(
u,\xi \right) \in \mathbb{R}\times \mathbb{R}^{n}$. Under these conditions,
every weak solution $u\in W^{1,F}\left( \Omega \right) $ to Dirichlet
problem (\ref{Dirichlet problem for a general case}) is globally bounded in $%
\overline{\Omega }$ if the boundary value $u_{0}$ is bounded too; i.e., $%
u_{0}\in L^{\infty }(\Omega )\cap W^{1,F}\left( \Omega \right) $. The\textit{%
\ Sobolev class }$W^{1,F}\left( \Omega \right) $\textit{\ of functions with
finite energy,} defined in (\ref{the Sobolev class}), in the context of this
example is given by the \textit{Sobolev functions} $u\in W^{1,1}\left(
\Omega \right) $ such that 
\begin{equation*}
F\left( u\right) :=\int_{\Omega }\left\{ \alpha \left( x,u\right) f\left(
x,u,Du\right) +\beta \left( x,u\right) g\left( x,u,Du\right) \right\}
\,dx\;<+\infty \,.
\end{equation*}
\end{example}

In the next example we emphasize the possibility to apply Theorem \ref{main
boundeness result} to an elliptic operator which is not the first variation
of an energy integral; i.e., the weak solution of the Dirichlet problem (\ref%
{Dirichlet problem}), independently of the right-hand side $b$, is not
necessarily a minimizer of an energy integral. Example \ref{Example with
weak solution which is not a minimizer} is only a model example, in the
sense that it is possible to exhibit a large class of cases where weak
solutions are not minimizers. The boundedness result obtained in the
particular Example \ref{Example with weak solution which is not a minimizer}
to our knowledge cannot be deduced by any regularity result now in the
literature, when the matrix $\left( a_{ij}\right) _{n\times n}$ is \textit{%
not symmetric}, even in the particular case that $a_{ij}\left( x\right) $
and $a\left( x\right) $ are independent of $u$ and $b$ is identically equal
to zero.

\begin{example}
\label{Example with weak solution which is not a minimizer}The Dirichlet
problem for a differential operator, sum of a linear operator (for instance)
and a variational nonuniformly elliptic nonlinear operator, could have this
analytic expression 
\begin{equation}
\left\{ 
\begin{array}{l}
\sum_{i,j=1}^{n}\tfrac{\partial }{\partial x_{i}}\left( a_{ij}\left(
x\right) u_{x_{j}}\right) +\tfrac{\partial }{\partial x_{1}}\left( a\left(
x,u\right) \left\vert u_{x_{1}}\right\vert ^{q-2}u_{x_{1}}\right) =b\left(
x,u,Du\right) ,\;\;\;\;\;x\in \Omega \,, \\ 
u=u_{0}\;\;\;\;\;\text{on}\;\;\partial \Omega \,,%
\end{array}%
\right.  \label{Dirichlet problem for sum of linear and nonlinear operators}
\end{equation}%
where the square matrix $\left( a_{ij}\right) _{n\times n}$\thinspace , of 
\textit{measurable functions }$a_{ij}=a_{ij}\left( x\right) $, is not
necessarily symmetric. But $\left( a_{ij}\right) _{n\times n}$ is definite
positive, in the sense that $c_{1}\left\vert \lambda \right\vert ^{2}\leq
\sum\limits_{i,j=1}^{n}a_{ij}\left( x\right) \lambda _{i}\lambda _{j}\leq
c_{2}\left\vert \lambda \right\vert ^{2}$ for all $\lambda \in \mathbb{R}%
^{n} $, a.e. $x\in \Omega $. Moreover $a\left( x,u\right) $ is a \textit{%
Carath\'{e}odory function such that }$c_{3}\leq a\left( x,u\right) \leq
c_{4} $. Here all the constants $c_{i}$, $i=1-4$, are positive and $q\geq 2$%
. We chose $u_{x_{1}}$ as an example; in fact we could have fixed some other
indices in the set $\left\{ 1,2,\ldots ,n\right\} $ in order to form a not
empty proper subset; we fixed above the index $\left\{ 1\right\} $. The
associated Sobolev class $W^{1,F}\left( \Omega \right) $ in this case is 
\begin{equation}
u\in W^{1,1}\left( \Omega \right) :\;\;F\left( u\right) :=\int_{\Omega
}\left\{ \tfrac{1}{2}\left\vert Du\right\vert ^{2}+\tfrac{1}{q}a\left(
x,u\right) \left\vert u_{x_{1}}\right\vert ^{q}\right\} \,dx\;<+\infty
\label{Sobolev class in the second example}
\end{equation}%
which is equivalent to the set of Sobolev functions $\left\{ u\in
W^{1,2}\left( \Omega \right) :\;u_{x_{1}}\in L^{q}\left( \Omega \right)
\right\} $. The integral $F\left( u\right) $ is coercive in $W^{1,2}\left(
\Omega \right) $ (i.e., $p=2$) but it is not coercive in $W^{1,q}\left(
\Omega \right) $. If the matrix $\left( a_{ij}\right) _{n\times n}$ is not
symmetric, even if $b\left( x,u,Du\right) $ is identically equal to zero,
the weak solutions of the Dirichlet problem (\ref{Dirichlet problem for sum
of linear and nonlinear operators}) are not necessarily minimizers neither
to $F\left( u\right) $ in (\ref{Sobolev class in the second example}) nor to
the integral $G\left( u\right) $ associated to the quadratic form associated 
$\sum\limits_{i,j=1}^{n}a_{ij}\left( x\right) \lambda _{i}\lambda _{j}$;
i.e., 
\begin{equation}
u\in W^{1,1}\left( \Omega \right) :\;\;G\left( u\right) :=\int_{\Omega }%
\big\{\tfrac{1}{2}\sum\limits_{i,j=1}^{n}a_{ij}\left( x\right)
u_{x_{i}}u_{x_{j}}+\tfrac{1}{q}a\left( x,u\right) \left\vert
u_{x_{1}}\right\vert ^{q}\big\}\,dx\;<+\infty \,.
\label{integral G(u) associated to the quadratic form}
\end{equation}%
The reason is that inside the integral of $G\left( u\right) $ we can
represent equivalently the quadratic form 
\begin{equation*}
\sum\limits_{i,j=1}^{n}a_{ij}\left( x\right)
u_{x_{i}}u_{x_{j}}=\sum\limits_{i,j=1}^{n}\overline{a}_{ij}\left( x\right)
u_{x_{i}}u_{x_{j}}\,,
\end{equation*}%
where $\overline{a}_{ij}=\frac{1}{2}\left( a_{ij}+a_{ji}\right) $ and $%
\left( \overline{a}_{ij}\right) _{n\times n}$ is the symmetric part of the
matrix $\left( a_{ij}\right) _{n\times n}$. Therefore the first Euler's
variation of this integral $G\left( u\right) $ does not reproduce the
differential equation (\ref{Dirichlet problem for sum of linear and
nonlinear operators})$_{1}$ in the general case, when the matrix $\left(
a_{ij}\right) _{n\times n}$ is not symmetric.

Condition (\ref{request on the u-dependence}) holds also if $a\left(
x,u\right) $ explicitly depends on $u$, as explained in Example \ref{Example
related to the double phase with u dependence}. By Theorem \ref{main
boundeness result}, when $b=b\left( x,u,Du\right) $ satisfies either the
growth (\ref{growth condition for b}), or (\ref{growth condition for |b|}),
with $p=2$, then any weak solution to the Dirichlet problem (\ref{Dirichlet
problem for sum of linear and nonlinear operators}), with $u_{0}\in
L^{\infty }(\Omega )\cap W^{1,F}\left( \Omega \right) $, is globally bounded
in $\overline{\Omega }$.
\end{example}

\section{Gradient growth of a convex function\label{A preliminary result}}

Related to the energy integral (\ref{energy integral}), \textit{convexity}
of $f$ plays a role in the\textit{\ growth of its gradient} $Df$, as shown
by the following results, which are inspired by Step 2 of Section 2 in \cite%
{Marcellini 1985} (see also \cite[Lemma 2.3]{Marcellini 2025}).

As before $f:\Omega \times \mathbb{R}\times \mathbb{R}^{n}\rightarrow \left[
0,+\infty \right) $ is a \textit{Carath\'{e}odory function}; i.e., $%
f=f\left( x,u,\xi \right) $ is measurable with respect to $x\in \Omega $ and
continuous in $\left( u,\xi \right) \in \mathbb{R\times R}^{n}$. Moreover $%
f\left( x,u,\xi \right) $ is \textit{convex} with respect to the gradient
variable $\xi =\left( \xi _{i}\right) _{i=1,\ldots ,n}\in \mathbb{R}^{n}$
for a.e. $x\in \Omega $ and all $u\in \mathbb{R}$. In the following Lemma %
\ref{Lemma 1} and Lemma \ref{Lemma 2} the variables $\left( x,u\right) $
only play the role of parameters; therefore here there is not need of the
continuity assumption on $f$ with respect to $u\in \mathbb{R}$. We deal with
convexity of $\xi \rightarrow f\left( x,u,\xi \right) $ and the $\Delta
_{2}- $\textit{conditions} (\ref{Delta-two}), (\ref{Delta-two (two sides)}).
We emphasize that the examples cited (\textit{double phase, variable
exponent, anisotropic case}) satisfy the $\Delta _{2}-$\textit{conditions} (%
\ref{Delta-two}) and (\ref{Delta-two (two sides)}), while \textit{%
exponential cases}, such as for instance either $f\left( \xi \right) :=\exp
\left( \left\vert \xi \right\vert \right) $ or $f\left( \xi \right) :=\exp
(\left\vert \xi \right\vert ^{2})$, do not satisfy these $\Delta _{2}-$%
conditions.

\begin{lemma}
\label{Lemma 1}Let $f\left( x,u,\xi \right) $, $f:\Omega \times \mathbb{R}%
\times \mathbb{R}^{n}\rightarrow \left[ 0,+\infty \right) $, be a $\Delta
_{2}$ convex function with respect to $\xi \in \mathbb{R}^{n}$, as in (\ref%
{Delta-two}) with constant $M$. Then the gradient $D_{\xi }f:\Omega \times 
\mathbb{R}\times \mathbb{R}^{n}\rightarrow \mathbb{R}^{n}$ of $f$, $D_{\xi
}f=\left( f_{\xi _{i}}\left( x,u,\xi \right) \right) _{i=1,\ldots ,n}$,
which exists for almost every $\xi $ in $\mathbb{R}^{n}$ once $\left(
x,u\right) $ are fixed, satisfies the growth condition 
\begin{equation}
\left( D_{\xi }f\left( x,u,\xi \right) ,\xi \right) \leq \left( M-1\right)
f\left( x,u,\xi \right) \,,\;\;\;\;\text{a.e.}\;\left( x,u,\xi \right) \in
\Omega \times \mathbb{R}\times \mathbb{R}^{n}.  \label{gradient growth}
\end{equation}
\end{lemma}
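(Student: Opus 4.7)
The plan is to exploit the subgradient inequality of the convex function $\xi\mapsto f(x,u,\xi)$ together with the $\Delta_2$-condition, evaluating the inequality at the doubled point $2\xi$.

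First, fix $(x,u)\in\Omega\times\mathbb{R}$ and work with a point $\xi\in\mathbb{R}^n$ at which the gradient $D_\xi f(x,u,\xi)$ exists (this holds a.e.\ by convexity, as noted in the statement). Since $\xi\mapsto f(x,u,\xi)$ is convex, the tangent hyperplane at $\xi$ lies below the graph, so for every $\eta\in\mathbb{R}^n$
\[
f(x,u,\eta) \geq f(x,u,\xi) + \bigl(D_\xi f(x,u,\xi),\,\eta-\xi\bigr).
\]
Choosing $\eta=2\xi$ yields
\[
f(x,u,2\xi) \geq f(x,u,\xi) + \bigl(D_\xi f(x,u,\xi),\,\xi\bigr).
\]

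Next, invoke the $\Delta_2$-condition (\ref{Delta-two}) with constant $M$ to replace the left-hand side: $f(x,u,2\xi)\leq M\,f(x,u,\xi)$. Subtracting $f(x,u,\xi)$ from both sides and using $f\geq 0$ gives
\[
\bigl(D_\xi f(x,u,\xi),\,\xi\bigr) \leq f(x,u,2\xi) - f(x,u,\xi) \leq (M-1)\,f(x,u,\xi),
\]
which is precisely (\ref{gradient growth}).

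There is essentially no obstacle beyond checking that the subgradient inequality holds at points of differentiability (which is the standard supporting-hyperplane property of convex functions in finite dimensions) and that the $\Delta_2$ bound applies pointwise in $\xi$ for fixed $(x,u)$. The only mild subtlety is that we do not need any structural assumption relating $f$ at different values of $u$ or any regularity in $x$, since $(x,u)$ are frozen throughout the argument; this is why, as the authors remark, the continuity in $u$ plays no role in this lemma.
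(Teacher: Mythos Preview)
Your proof is correct and follows essentially the same approach as the paper: both fix $(x,u)$, apply the convexity (subgradient) inequality with the choice $\eta=2\xi$, and then invoke the $\Delta_2$-bound $f(x,u,2\xi)\le M f(x,u,\xi)$ to conclude. Your closing remark that $(x,u)$ are merely parameters and that no regularity in $u$ is needed also mirrors the paper's observation.
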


\begin{proof}
As before, $\left( \cdot ,\cdot \right) $ denotes the scalar product in{\ $%
\mathbb{R}^{n}$. }In the proof we do not explicitly denote the dependence of 
$f$ on $\left( x,u\right) $, since these variables remain constant. We use
the convexity inequality for the function $f=f\left( \xi \right) $ 
\begin{equation}
f\left( \xi _{1}\right) \geq f\left( \xi _{0}\right) +\left( D_{\xi }f\left(
\xi _{0}\right) ,\xi _{1}-\xi _{0}\right) \,,  \label{in the Lemma - a}
\end{equation}%
valid for almost every $\xi _{0}$ in $\mathbb{R}^{n}$ where $f$ is
differentiable and for every $\xi _{1}\in \mathbb{R}^{n}$.

With the notation $\xi _{0}:=\xi $ and the choice $\xi _{1}:=2\xi $ we can
write 
\begin{equation*}
f\left( 2\xi \right) \geq f\left( \xi \right) +\left( D_{\xi }f\left( \xi
\right) ,\xi \right) \,
\end{equation*}%
and, by the $\Delta _{2}-$condition in (\ref{Delta-two}), we obtain the
conclusion 
\begin{equation}
\left( D_{\xi }f\left( \xi \right) ,\xi \right) \leq f\left( 2\xi \right)
-f\left( \xi \right) \leq \left( M-1\right) \,f\left( \xi \right) \,.
\label{in the Lemma - b}
\end{equation}
\end{proof}

\bigskip

\begin{remark}
\label{Remark}Let us make a check in a model case $f\left( \xi \right)
:=\left\vert \xi \right\vert ^{p}+a\left( x\right) \left\vert \xi
\right\vert ^{q}$, with $1<p\leq q$ and $a\left( x\right) \geq 0$ for $x\in
\Omega $. An elementary computation for $g\left( \xi \right) :=\left\vert
\xi \right\vert ^{p}$ gives 
\begin{equation*}
g_{\xi _{i}}\left( \xi \right) =p\left\vert \xi \right\vert ^{p-1}\frac{\xi
_{i}}{\left\vert \xi \right\vert }=p\left\vert \xi \right\vert ^{p-2}\xi
_{i},\;\;\;\;g_{\xi _{i}}\left( \xi \right) \xi _{i}=p\left\vert \xi
\right\vert ^{p-2}\left( \xi _{i}\right) ^{2}\,.
\end{equation*}%
We obtain $\left( D_{\xi }g\left( \xi \right) ,\xi \right) =\displaystyle %
\sum_{i=1}^{n}g_{\xi _{i}}\left( \xi \right) \xi _{i}=p\,\left\vert \xi
\right\vert ^{p}=p\,g\left( \xi \right) $\thinspace . Then, similarly for $%
f\left( \xi \right) =\left\vert \xi \right\vert ^{p}+a\left( x\right)
\left\vert \xi \right\vert ^{q}$, 
\begin{equation*}
\left( D_{\xi }f\left( \xi \right) ,\xi \right) =p\,\left\vert \xi
\right\vert ^{p}+a\left( x\right) q\,\left\vert \xi \right\vert ^{q}\leq
q\left( \left\vert \xi \right\vert ^{p}+a\left( x\right) \left\vert \xi
\right\vert ^{q}\right) =q\,f\left( \xi \right)
\end{equation*}%
Let us compare this example with the conclusion (\ref{gradient growth}) of
Lemma \ref{Lemma 1}. Since $f\left( 2\xi \right) =2^{p}\,\left\vert \xi
\right\vert ^{p}+a\left( x\right) 2^{q}\left\vert \xi \right\vert ^{q}\leq
2^{q}\left( \left\vert \xi \right\vert ^{p}+a\left( x\right) \left\vert \xi
\right\vert ^{q}\right) =2^{q}f\left( \xi \right) $, then $M=2^{q}$ in the $%
\Delta _{2}-$condition (\ref{Delta-two}). In this case (\ref{gradient growth}%
) reads 
\begin{equation*}
\left( D_{\xi }f(\xi),\xi \right) =p\,\left\vert \xi \right\vert
^{p}+a\left( x\right) q\,\left\vert \xi \right\vert ^{q}\leq \left(
2^{q}-1\right) \left( \left\vert \xi \right\vert ^{p}+a\left( x\right)
\left\vert \xi \right\vert ^{q}\right) \,,\;\;\;\;\forall \;\xi \in \mathbb{R%
}^{n}.
\end{equation*}%
Dividing by $\left\vert \xi \right\vert ^{p}$ when $p<q$ we obtain $%
p+a\left( x\right) q\left\vert \xi \right\vert ^{q-p}\leq \left(
2^{q}-1\right) \left( 1+a\left( x\right) \left\vert \xi \right\vert
^{q-p}\right) $. This gives $p\leq 2^{q}-1$ and $q\leq 2^{q}-1$; i.e., $%
2^{q}\geq $ $1+q$, which is compatible ($2^{q}=\left( 1+1\right)
^{q}=1+q+... $) and it reduces to an equality when $p=q=1$.

About a similar estimate from below for $\left( D_{\xi }f\left( \xi \right)
,\xi \right) $ in (\ref{gradient growth}), to simplify we can check the
example $g\left( \xi \right) :=\left\vert \xi \right\vert ^{p}$, with $%
\left( D_{\xi }g\left( \xi \right) ,\xi \right) =p\,g\left( \xi \right) $.
In this case $g\left( \xi \right) $ and $\left( D_{\xi }g\left( \xi \right)
,\xi \right) $ are equal each other (up to the multiplicative constant $p$);
thus we have symmetric inequalities (in fact equalities in this simpler
case) both in the left and in the right-hand sides of (\ref{gradient growth
(two-sides)}) below. In the next Lemma \ref{Lemma 2} we show that the
two-sides assumption (\ref{Delta-two (two sides)}) is sufficient to obtain
symmetric inequalities in the left and in the right-hand sides, as stated in
(\ref{gradient growth (two-sides)}).

We conclude with the check of the conclusion (\ref{gradient growth
(two-sides)}) of Lemma \ref{Lemma 2} for this example $g\left( \xi \right)
:=\left\vert \xi \right\vert ^{p}$, $p\geq 1$. In the two sides $\Delta
_{2}- $condition (\ref{Delta-two (two sides)}), we have $m=M=2^{p}$. The
left estimate in (\ref{gradient growth (two-sides)}) $2\left( 1-\tfrac{1}{m}%
\right) g\left( \xi \right) \leq \left( D_{\xi }g\left( \xi \right) ,\xi
\right) $, being $\left( D_{\xi }g\left( \xi \right) ,\xi \right)
=p\,\left\vert \xi \right\vert ^{p}$, is equivalent to $2\left(
1-2^{-p}\right) \leq p$, which is compatible and it reduces to an equality
when $p=1$. In fact, the function $p\mapsto h\left( p\right) :=p+2^{1-p}-2$
has derivative $h^{\prime }\left( p\right) =1-2^{1-p}\log 2<1$ for all $p\in 
\mathbb{R}$. Thus $h\left( p\right) $ is increasing in $\mathbb{R}$ and its
minimum value for $p\in \left[ 1,+\infty \right) $ is $h\left( 1\right) =0$;
therefore $h\left( p\right) =p+2^{1-p}-2\geq 0$ for all $p\in \left[
1,+\infty \right) $.
\end{remark}

\begin{lemma}
\label{Lemma 2}Let $f\left( x,u,\xi \right) $, $f:\Omega \times \mathbb{R}%
\times \mathbb{R}^{n}\rightarrow \left[ 0,+\infty \right) $ be a convex
function with respect to $\xi \in \mathbb{R}^{n}$, satisfying the two sides $%
\Delta _{2}-$condition (\ref{Delta-two (two sides)}) with constants $M\geq
m>1$. Then the gradient $D_{\xi }f:\Omega \times \mathbb{R}\times \mathbb{R}%
^{n}\rightarrow \mathbb{R}^{n}$ of $f$ with respect to the $\xi $ variable, $%
D_{\xi }f=(f_{\xi _{i}}\left( x,u,\xi \right) )_{i=1,\ldots ,n}$ satisfies
the growth conditions 
\begin{equation}
2\left( 1-\tfrac{1}{m}\right) \,f\left( x,u,\xi \right) \leq \left( D_{\xi
}f\left( x,u,\xi \right) ,\xi \right) \leq \left( M-1\right) f\left( x,u,\xi
\right) \,,  \label{gradient growth (two-sides)}
\end{equation}%
for a.e. $\left( x,u,\xi \right) \in \Omega \times \mathbb{R}\times \mathbb{R%
}^{n}$.
\end{lemma}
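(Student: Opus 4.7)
The upper bound is already the content of Lemma \ref{Lemma 1}, since the right half of (\ref{Delta-two (two sides)}) is exactly the hypothesis (\ref{Delta-two}) used there. Thus the only new content is the lower bound
\[
2\bigl(1-\tfrac{1}{m}\bigr)\,f(x,u,\xi)\leq \bigl(D_{\xi}f(x,u,\xi),\xi\bigr),
\]
and the plan is to mirror the argument of Lemma \ref{Lemma 1}, replacing the dilation $\xi\mapsto 2\xi$ by its inverse $\xi\mapsto \xi/2$, so that the reverse $\Delta_{2}$ inequality takes over the role played by the forward one.

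Fix $(x,u)$ (which I will suppress from the notation as in the previous proof) and a point $\xi\in\mathbb{R}^{n}$ at which $f$ is differentiable; by convexity this holds for a.e.\ $\xi$. Apply the convexity inequality (\ref{in the Lemma - a}) with $\xi_{0}:=\xi$ and $\xi_{1}:=\xi/2$. This gives
\[
f(\xi/2)\;\geq\; f(\xi)+\bigl(D_{\xi}f(\xi),\tfrac{\xi}{2}-\xi\bigr)\;=\;f(\xi)-\tfrac{1}{2}\bigl(D_{\xi}f(\xi),\xi\bigr),
\]
which rearranges to $(D_{\xi}f(\xi),\xi)\geq 2\bigl(f(\xi)-f(\xi/2)\bigr)$.

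Now invoke the left inequality in (\ref{Delta-two (two sides)}) applied at $\eta:=\xi/2$, namely $m\,f(\xi/2)\leq f(\xi)$, i.e.\ $f(\xi/2)\leq \tfrac{1}{m}f(\xi)$. Substituting yields
\[
\bigl(D_{\xi}f(\xi),\xi\bigr)\;\geq\; 2\bigl(f(\xi)-\tfrac{1}{m}f(\xi)\bigr)\;=\;2\bigl(1-\tfrac{1}{m}\bigr)\,f(\xi),
\]
which is exactly the required lower bound. Since the upper bound is Lemma \ref{Lemma 1}, this completes the proof.

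There is no real obstacle: the whole argument is a two-line symmetrization of Lemma \ref{Lemma 1}, and the only mild subtlety is ensuring that the convexity/subgradient inequality is used at a point of differentiability of $f$, which by convexity of $\xi\mapsto f(x,u,\xi)$ holds almost everywhere—consistent with the "a.e.\ $(x,u,\xi)$" quantifier in the statement.
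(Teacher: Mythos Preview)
Your proof is correct and essentially identical to the paper's own argument: both obtain the upper bound by invoking Lemma \ref{Lemma 1}, and both derive the lower bound by applying the convexity inequality (\ref{in the Lemma - a}) with $\xi_0=\xi$, $\xi_1=\xi/2$ and then using the reverse $\Delta_2$ inequality in the form $f(\xi/2)\le \tfrac{1}{m}f(\xi)$.
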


\begin{proof}
We already obtained in Lemma \ref{Lemma 1} the right-hand side of (\ref%
{gradient growth (two-sides)}). As before we do not explicitly denote the
dependence of $f$ on $\left( x,u\right) $ and we use the convexity
inequality (\ref{in the Lemma - a}) $f\left( \xi _{1}\right) \geq f\left(
\xi _{0}\right) +\left( D_{\xi }f\left( \xi _{0}\right) ,\xi _{1}-\xi
_{0}\right) $. When $\xi _{0}:=\xi $ and $\xi _{1}:=\frac{\xi }{2}$ the
convexity inequality gives 
\begin{equation*}
f\left( \tfrac{\xi }{2}\right) \geq f\left( \xi \right) -\left( D_{\xi
}f\left( \xi \right) ,\tfrac{\xi }{2}\right) \,\,.
\end{equation*}%
We read the condition in the left-hand side of (\ref{Delta-two (two sides)})
in the form $m\,f\left( \tfrac{\xi }{2}\right) \leq f\left( \xi \right) $
and we get 
\begin{equation*}
\tfrac{1}{m}f\left( \xi \right) \geq f\left( \tfrac{\xi }{2}\right) \geq
f\left( \xi \right) -\tfrac{1}{2}\left( D_{\xi }f\left( \xi \right) ,\xi
\right) \,\,.
\end{equation*}%
Therefore $\tfrac{1}{2}\left( D_{\xi }f\left( \xi \right) ,\xi \right) \geq
\left( 1-\tfrac{1}{m}\right) f\left( \xi \right) $, which corresponds to the
left-hand side of the conclusion (\ref{gradient growth (two-sides)}).\bigskip
\end{proof}

\section{Proof of the global boundedness results\label{Section: Proof}}

We prove in this section the global boundedness Theorem \ref{main boundeness
result} for weak solutions $u:\Omega \subset \mathbb{\ R}^{n}\rightarrow 
\mathbb{R}$, $n\geq 2$, to the elliptic Dirichlet problem (\ref{Dirichlet
problem}); that is, to the elliptic Dirichlet problem 
\begin{equation}
\left\{ 
\begin{array}{l}
\sum_{i=1}^{n}\frac{\partial }{\partial x_{i}}a^{i}\left( x,u,Du\right)
=b\left( x,u,Du\right) ,\;\;\;\;\;x\in \Omega \,, \\ 
u=u_{0}\;\;\;\;\;\text{on}\;\;\partial \Omega \,.%
\end{array}%
\right.  \label{GCfe:Dirichlet problem}
\end{equation}%
For the sake of clarity we resume here the assumptions. The vector field $%
a\left( x,u,Du\right) :=\left( a^{i}\left( x,u,\xi \right) \right)
_{i=1,\ldots ,n}$ and the right-hand side $b\left( x,u,\xi \right) $ are
Carath\'{e}odory maps defined in $\Omega \times \mathbb{R}\times \mathbb{R}%
^{n}$; i.e. measurable with respect to $x\in \Omega $ and continuous in $%
\left( u,\xi \right) \in \mathbb{R\times R}^{n}$. The boundary datum $%
u_{0}:\Omega \rightarrow \mathbb{R}$ is bounded in $\Omega $. As in Section %
\ref{Section: Introduction and statements} we consider a Carath\'{e}odory
function $f=f\left( x,u,\xi \right) $, \textit{convex} with respect to the
gradient variable $\xi =\left( \xi _{i}\right) _{i=1,\ldots ,n}\in \mathbb{R}%
^{n}$ for a.e. $x\in \Omega $ and all $u\in \mathbb{R}$. This function $%
f\geq 0$ is \textit{coercive} with exponent $p\in (1,n]$ as in (\ref%
{coercivity condition}), it satisfies the \textit{two sided }$\Delta _{2}-$%
\textit{condition} (\ref{Delta-two (two sides)}) with respect to $\xi \in 
\mathbb{R}^{n}$, {the} condition (\ref{request on the u-dependence}) with
respect to{\ $u\in \mathbb{R}$ and the natural summability property (\ref%
{summability property})}. The Sobolev\textit{\ class} $W^{1,F}\left( \Omega
\right) $ where to look for weak solutions is defined in (\ref{the Sobolev
class}) and $W_{0}^{1,F}\left( \Omega \right) :=W^{1,F}\left( \Omega \right)
\cap W_{0}^{1,p}\left( \Omega \right) $. A weak solution to (\ref%
{GCfe:Dirichlet problem}) is a Sobolev function $u\in
u_{0}+W_{0}^{1,F}(\Omega )$ such that 
\begin{equation}
\int_{\Omega }\left\{ \sum_{i=1}^{n}a^{i}(x,u,Du)\varphi
_{x_{i}}+b(x,u,Du)\varphi \right\} \,dx=0\,,\;\;\;\;\forall \;\varphi \in
W_{0}^{1,F}(\Omega ).  \label{GCfe:aweaksol}
\end{equation}%
The vector field $a\left( x,u,Du\right) :=\left( a^{i}\left( x,u,\xi \right)
\right) _{i=1,\ldots ,n}$ satisfies the conditions 
\begin{equation}
\left\{ 
\begin{array}{l}
\left( a\left( x,u,\xi \right) ,\xi \right) \geq c_{1}\,\left( D_{\xi
}f\left( x,u,\xi \right) ,\xi \right) -c_{2}\left\vert u\right\vert
^{p^{\ast }}-b_{1}\left( x\right) \\ 
\left( a\left( x,u,\xi \right) ,\xi \right) \leq c_{3}\,\left( D_{\xi
}f\left( x,u,\xi \right) ,\xi \right) +c_{4}\left\vert u\right\vert
^{p^{\ast }}+b_{2}\left( x\right)%
\end{array}%
\right.  \label{GCfe:comparison inequalities}
\end{equation}%
for some positive constants $c_{i}$, for a.e. $x\in \Omega $, every $\left(
u,\xi \right) \in \mathbb{R}\times \mathbb{R}^{n}$, and for{\ nonnegative}
functions $b_{1},b_{2}$ with $b_{1}\in L^{s_{1}}(\Omega )$, $s_{1}>\tfrac{n}{%
p}$, $b_{2}\in L^{1}(\Omega )$.

The right-hand side $b\left( x,u,\xi \right) $ is a Carath\'{e}odory
function defined in $\Omega \times \mathbb{R}\times \mathbb{R}^{n}$ and
satisfying 
\begin{equation}
\left\{ 
\begin{array}{l}
\operatorname{sign}\left( u\right) \,b\left( x,u,\xi \right) \geq -\,c_{7}\,\big(%
f(x,u,\xi )^{\frac{p^{\ast }-1}{p^{\ast }}}+|\xi |^{p\frac{p^{\ast }-1}{%
p^{\ast }}}+\left\vert u\right\vert ^{p^{\ast }-1}\big)-b_{3}(x) \\ 
u\,b\left( x,u,\xi \right) \leq c_{8}\,\big(f\left( x,u,\xi \right)
+\left\vert u\right\vert ^{p^{\ast }}\big)+b_{4}(x)%
\end{array}%
\right.  \label{GCfe:growth condition for bpair}
\end{equation}%
for some positive constants $c_{i}\,$, for a.e. $x\in \Omega $, every $%
\left( u,\xi \right) \in \mathbb{R}\times \mathbb{R}^{n}$ and nonnegative
functions $b_{3},b_{4}$, such that $b_{3}\in L^{s_{3}}(\Omega )$, $s_{3}>%
\tfrac{n}{p}$, and $b_{4}\in L^{1}(\Omega )$.

If $p=n$, due to the arbitrariness in the choice of $p^{\ast }$, $\frac{%
p^{\ast }-1}{p^{\ast }}$ is any positive number in $(0,1)$ and the
assumptions on $s_{1},s_{3}$ reduce to $s_{1},s_{3}>1$.

\begin{remark}
\label{r:boundalternativoperb} A sufficient condition to (\ref{GCfe:growth
condition for bpair}) is 
\begin{equation*}
|b(x,u,\xi )|\leq c\,\big(f(x,u,\xi )^{\frac{p^*-1}{p^*} }+|\xi |^{p\frac{%
p^*-1}{p^*}}+\left\vert u\right\vert ^{p^*-1}+ b_3(x)\big)
\end{equation*}
for some $c>0$ and a nonnegative $b_3\in L^{s_3}(\Omega )$. Indeed, from
this inequality the first inequality in (\ref{GCfe:growth condition for
bpair}) trivially follows. As far as the second inequality in (\ref%
{GCfe:growth condition for bpair}) is concerned, from the H\"{o}lder
inequality we have 
\begin{align}
u\,b(x,u,\xi )\leq & \,c\,|u|\big(f(x,u,\xi )^{\frac{p^*-1}{p^*} }+|\xi |^{p%
\frac{p^*-1}{p^*}}+\left\vert u\right\vert ^{p^*-1}+ b_3(x)\big)  \notag \\
\leq & \,c\,\Big(f(x,u,\xi ) +|\xi |^{ p }+|u|^{p^{\ast }} +b_3^{\frac{%
p^{\ast }}{p^{\ast }-1}}\Big).  \label{r:stimalimub}
\end{align}
We observe that $|\xi |^{p}$ can be bounded from above by $f(x,u,\xi)$, due
to the coercivity condition (\ref{coercivity condition}). Moreover, due to
the bounds on the exponents listed above, the inequalities $s_3> \frac{%
p^{\ast }}{p^{\ast }-p}>\frac{p^{\ast }}{p^{\ast }-1}$ hold, hence the
right-hand side in (\ref{GCfe:growth condition for bpair}) is obtained with $%
b_4:=b_3^{\frac{p^{\ast }}{p^{\ast }-1}}\in L^{1}(\Omega )$.
\end{remark}

\bigskip

Before proving Theorem \ref{main boundeness result} we give some preliminary
remarks about the choice of the test function and the well posedness of (\ref%
{GCfe:aweaksol}).

\subsection{The test function and the well definition of weak solution\label%
{ss:testfunctwelldef}}

It is important to verify that the test-function $\varphi $ used in the
proof of Theorem \ref{main boundeness result} is in $W_{0}^{1,F}(\Omega )$
and that the integrals 
\begin{equation}
\int_{\Omega }(a(x,u,Du),D\varphi \left( x\right) )\,dx  \label{pairing1}
\end{equation}%
and 
\begin{equation}
\int_{\Omega }b(x,u,Du)\varphi \left( x\right) \,dx  \label{pairing2}
\end{equation}%
are well defined. Given the boundary datum $u_{0}\in L^{\infty }(\Omega
)\cap W^{1,F}(\Omega )$ and $u\in u_{0}+W_{0}^{1,F}(\Omega )$ a weak
solution to (\ref{GCfe:Dirichlet problem}), we will use, as test function $%
\varphi $ in (\ref{GCfe:aweaksol}), the function 
\begin{equation}
\varphi (x):=(\left\vert u(x)\right\vert -k)_{+}\operatorname{sgn}(u(x))\quad \text{
for a.e.}\ x\in \Omega \,,  \label{e:varphitest}
\end{equation}%
where $(|u(x)|-k)_{+}=\max \{|u(x)|-k,0\}$ and $k>\Vert u_{0}\Vert _{\infty
} $. We show that $\varphi \in W_{0}^{1,F}(\Omega )$.

\begin{remark}
\label{r:testinW1F} We claim that the function $\varphi $ in (\ref%
{e:varphitest}) belongs to $W_{0}^{1,F}(\Omega )$. First, we notice that $%
\varphi \in W_{0}^{1,p}(\Omega )$, since $u\in W^{1,p}(\Omega )$ and the
trace is $0$, because 
\begin{equation*}
\left\vert u(x)\right\vert -k<\left\vert u_{0}(x)\right\vert -\Vert
u_{0}\Vert _{\infty }\le 0\ \ \text{ on the boundary of $\Omega $}.
\end{equation*}
Let us prove that $\varphi \in W^{1,F}(\Omega )$. Define 
\begin{equation}
A_{k}:=\{x\in \Omega \,:\,\left\vert u(x)\right\vert >k\}.  \label{e:defAk}
\end{equation}
Then, a.e. in $\Omega $, 
\begin{equation}
\varphi =((\left\vert u\right\vert -k)\operatorname{sgn}(u))\,\chi _{A_{k}},\qquad
D\varphi =Du\,\chi _{A_{k}},  \label{e:propphi}
\end{equation}
therefore 
\begin{equation*}
F(\varphi )=\int_{A_{k}}f(x,(\left\vert u(x)\right\vert -k)\operatorname{sgn}
(u(x)),Du)\,dx+\int_{\Omega \setminus A_{k}}f(x,0,0)\,dx.
\end{equation*}
The last integral is finite because of (\ref{summability property}). The
first integral can be estimated by using (\ref{request on the u-dependence}%
). Indeed, for a.e. $x\in A_{k}$ the following inequalities hold: 
\begin{equation*}
\text{if $u(x)>0$:}\qquad 0<(\left\vert u(x)\right\vert -k)\operatorname{sgn}
(u(x))=u(x)-k\leq u(x);
\end{equation*}
\begin{equation*}
\text{if $u(x)<0$:}\qquad 0>(\left\vert u(x)\right\vert -k)\operatorname{sgn}
(u(x))=k+u(x)\geq u(x).
\end{equation*}
Therefore, by (\ref{request on the u-dependence}), 
\begin{equation*}
\int_{A_{k}}f(x,(\left\vert u\right\vert -k)\operatorname{sgn}(u),Du)\,dx\leq
c_6\int_{A_{k}}f(x,u,Du)\,dx
\end{equation*}
and this last integral is finite, because $u\in W^{1,F}(\Omega )$.
\end{remark}

We now discuss the well posedness of the definition of weak solution in
relationship with our summability assumptions.

\begin{remark}[about the vector field $a$]
\label{r:a<p*} Precisely, we discuss the well posedness of 
\begin{equation*}
\int_{\Omega }(a(x,u,Du),D\varphi \left( x\right) )\,dx\,,
\end{equation*}
where $\varphi$ is the test function in (\ref{e:varphitest}). 
%	To do this we use the inequalities (\ref{GCfe:comparison inequalities}) and the right inequality in  (\ref{gradient growth (two-sides)}). 
From (\ref{GCfe:comparison inequalities}) we obtain 
\begin{align}
\int_{\Omega} |(a(x,u,Du),Du \left( x\right))|\,dx \le &\, c \int_{\Omega}
\left( D_{\xi }f\left( x,u,Du \right) ,Du \right)\,dx  \notag \\
&+ c\int_{\Omega}\left( \left\vert u\right\vert ^{p^*}+ b_{1}\left(
x\right)+ b_{2}\left( x\right)\right)\,dx,  \label{e:postpairinga}
\end{align}
for a positive constant depending on $c_i$, $i=1,\ldots,4$. The first
integral at the right-hand side is finite, due to the right inequality in (%
\ref{gradient growth (two-sides)}) in Lemma \ref{Lemma 2}, indeed 
\begin{equation*}
\int_{\Omega} \left( D_{\xi }f\left( x,u,Du \right) ,Du \right)\,dx\le
(M-1)\int_{\Omega}f(x,u,Du)\,dx\,,
\end{equation*}
which is finite because $u\in W^{1,F}(\Omega)$. As far as the last integral
in (\ref{e:postpairinga}) is concerned, we first notice that, by
assumptions, $b_1,b_2\in L^1(\Omega)$. To conclude, it suffices to remark
that, since $u\in W^{1,F}(\Omega)$, then, by (\ref{coercivity condition}),
it is $u\in W^{1,p}(\Omega)$ and, in particular, $u\in L^{p^*}(\Omega)$.
\end{remark}

%
%	\begin{equation}
%		\left\{ 
%		\begin{array}{l}
%			\left( a\left( x,u,\xi \right) ,\xi \right) \geq c_{1}\,\left( D_{\xi
%			}f\left( x,u,\xi \right) ,\xi \right) -c_{2}\left\vert u\right\vert ^{\theta
%				_{1}}-b_{1}\left( x\right) \\ 
%			\left( a\left( x,u,\xi \right) ,\xi \right) \leq c_{3}\,\left( D_{\xi
%			}f\left( x,u,\xi \right) ,\xi \right) +c_{4}\left\vert u\right\vert ^{\theta
%				_{2}}+b_{2}\left( x\right)%
%		\end{array}%
%		\right.  \label{comparison inequalities}
%	\end{equation}%
%	

\begin{remark}[about the datum $b$]
\label{r:datumb} We claim that if $\varphi $ is the test function in (\ref%
{e:varphitest}), then $x\mapsto b(x,u,Du) \varphi \left( x\right)$ is in $%
L^1(\Omega)$. 
%By Remark \ref{r:testinW1F}, to be in $W_{0}^{1,F}\left( \Omega \right)$. 
We need to prove that for every $k>0$ 
\begin{equation}
b(x,u,Du)u\in L^1(A_k), \qquad b(x,u,Du)\operatorname{sgn}(u)\in L^1(A_k),
\label{GCfe:bb}
\end{equation}
where $A_k:=\{|u|>k\}$. To prove the first summability property it is
sufficient, due to (\ref{GCfe:growth condition for bpair}), to prove that
both 
\begin{equation}
x\mapsto f(x,u,Du )+\left\vert u\right\vert ^{p^{\ast }}+b_4(x)
\label{GCe:righthandsidebuL1}
\end{equation}
and 
\begin{equation}
x\mapsto |u|\big(f(x,u,Du )^{\frac{p^*-1}{p^*} }+|Du |^{p\frac{p^*-1}{p^*}%
}+\left\vert u\right\vert ^{p^*-1}+ b_3(x)\big)  \label{GCe:lefthandsidebuL1}
\end{equation}
are in $L^1(\Omega)$.

The function in (\ref{GCe:righthandsidebuL1}) is trivially in $L^1(\Omega)$,
because $u\in W^{1,F}(\Omega)\subseteq W^{1,p}(\Omega)$. Let us consider the
function in (\ref{GCe:lefthandsidebuL1}). As in Remark \ref%
{r:boundalternativoperb}, 
\begin{align*}
|u|(f(x,u,Du )^{\frac{p^*-1}{p^*} }+&|Du |^{p\frac{p^*-1}{p^*}}+\left\vert
u\right\vert ^{p^*-1}+ b_3(x)) \\
\le & \,c\,\big( f(x,u,Du) +|Du|^{p}+|u|^{p^*} +b_3^{\frac{p^{\ast }}{%
p^{\ast }-1}}\big)
\end{align*}
and, due to the bound on the summability exponent $s_3$ of $b_3$, we
conclude that (\ref{GCe:lefthandsidebuL1}) is in $L^1(\Omega)$.

As far as the second summability condition in (\ref{GCfe:bb}) is concerned,
we use the estimates (\ref{GCfe:growth condition for bpair}) once again to
prove that the function is bounded from below and from above by functions in 
$L^1(A_k)$. The function 
\begin{equation*}
x\mapsto f(x,u,Du )^{\frac{p^*-1}{p^*} }+|Du |^{p\frac{p^*-1}{p^*}%
}+\left\vert u\right\vert ^{p^*-1}+ b_3(x)
\end{equation*}
can be estimated, up to a constant, by 
\begin{equation*}
x\mapsto f(x,u,Du)+\left\vert Du\right\vert ^{p}+\left\vert u\right\vert
^{p^*}+b_3(x)+1
\end{equation*}
that is in $L^1(\Omega)$. On the other hand, by the second inequality in (%
\ref{GCfe:growth condition for bpair}), for every $x\in A_k$ 
\begin{align*}
b(x,u,Du)\operatorname{sgn}(u)\le & \,\frac{1}{|u|} \big(c_8 f(x,u,Du)+c_8\left\vert
u\right\vert ^{p^{\ast }}+b_4(x)\big) \\
\le & \,\frac{1}{k} \big(c_8 f(x,u,Du)+c_8\left\vert u\right\vert ^{p^{\ast
}}+b_4(x)\big)
\end{align*}
and the right-hand side is in $L^1(A_k)$.
\end{remark}

\subsection{The Caccioppoli-type inequality}

In this section we prove a Caccioppoli-type inequality for weak solutions of
the Dirichlet problem (\ref{GCfe:Dirichlet problem}). In the following, as
before, $A_{k}:=\{x\in \Omega \,:\,|u(x)|>k\}$.

\begin{proposition}
Let $u_{0}\in L^{\infty }(\Omega )\cap W^{1,F}(\Omega )$ and let $u\in
u_{0}+W_{0}^{1,F}(\Omega )$ be a weak solution to (\ref{GCfe:Dirichlet
problem}) under the assumptions of Theorem \ref{main boundeness result}.
Then there exists $c>0$, depending on the data, but not on $u$, such that
for every $k\in \mathbb{R}$, $k>\max\{\Vert u_{0}\Vert _{\infty },1\}$, 
\begin{align}
\int_{A_{k}}|Du|^{p}\,dx\leq & \,c\Vert b_3\Vert _{L^{s_3}(\Omega )}\Vert
|u|-k\Vert _{L^{p^{\ast }}(A_{k})}|A_{k}|^{1-\frac{1}{s_3}-\frac{1 }{
p^{\ast }}}  \notag \\
& +c\,\Vert |u|-k\Vert _{L^{p^{\ast }}(A_{k})}^{p^{\ast } }  \notag \\
& +c\,k^{p^{\ast }}|A_{k}|+c\Vert b_{1}\Vert _{L^{s_{1}}(\Omega
)}|A_{k}|^{1- \frac{1}{s_{1}}}.  \label{e:fineIIIstep}
\end{align}
\label{GCfp:Caccioppoli}
\end{proposition}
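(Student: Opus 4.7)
The plan is to test the weak formulation \eqref{GCfe:aweaksol} against $\varphi(x):=(|u(x)|-k)_{+}\operatorname{sgn}(u(x))$, which is admissible by Remark \ref{r:testinW1F} (and the pairings are well-defined by Remarks \ref{r:a<p*}--\ref{r:datumb}). Since $D\varphi=Du\,\chi_{A_k}$ and $\varphi=(|u|-k)\operatorname{sgn}(u)\,\chi_{A_k}$ a.e., the weak formulation yields
\begin{equation*}
\int_{A_k}(a(x,u,Du),Du)\,dx=-\int_{A_k}b(x,u,Du)(|u|-k)\operatorname{sgn}(u)\,dx.
\end{equation*}

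For the left-hand side I would use the lower bound in \eqref{GCfe:comparison inequalities}, followed by the lower bound from Lemma \ref{Lemma 2} applied to $D_\xi f$, and finally the coercivity \eqref{coercivity condition} $f(x,u,\xi)\ge c_5|\xi|^p$, to conclude that for some $c>0$,
\begin{equation*}
\int_{A_k}(a,Du)\,dx\ \ge\ c\int_{A_k}f(x,u,Du)\,dx-c_2\int_{A_k}|u|^{p^*}\,dx-\int_{A_k}b_1(x)\,dx.
\end{equation*}
The two negative terms are then estimated by the decomposition $|u|^{p^*}\le C((|u|-k)^{p^*}+k^{p^*})$ on $A_k$ and H\"older's inequality on $b_1$ with exponent $s_1$, producing the last two contributions in \eqref{e:fineIIIstep}.

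For the right-hand side I would invoke the unilateral lower bound in \eqref{GCfe:growth condition for bpair} on $\operatorname{sgn}(u)\,b$ (which is exactly the sign that appears after the change of sign), giving
\begin{equation*}
-\int_{A_k}b\operatorname{sgn}(u)(|u|-k)\,dx\le\int_{A_k}(|u|-k)\bigl[c_7\bigl(f^{\frac{p^*-1}{p^*}}+|Du|^{p\frac{p^*-1}{p^*}}+|u|^{p^*-1}\bigr)+b_3\bigr]\,dx.
\end{equation*}
The first two summands I would control by Young's inequality with conjugate exponents $\tfrac{p^*}{p^*-1}$ and $p^*$: the factor $f^{(p^*-1)/p^*}$ (respectively $|Du|^{p(p^*-1)/p^*}$) combines with $(|u|-k)$ to produce $\epsilon f$ (respectively $\epsilon|Du|^p$) plus $C(\epsilon)(|u|-k)^{p^*}$. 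The term $(|u|-k)|u|^{p^*-1}$ is handled by H\"older and the splitting $|u|^{p^*-1}\le C((|u|-k)^{p^*-1}+k^{p^*-1})$ on $A_k$, again yielding $(|u|-k)^{p^*}$ plus $k^{p^*}|A_k|$. Finally, the $b_3$ contribution is treated by the generalized H\"older with the three exponents $s_3$, $p^*$ and $(1-\tfrac{1}{s_3}-\tfrac{1}{p^*})^{-1}$ acting on $b_3$, $(|u|-k)$ and $\chi_{A_k}$ respectively, which produces exactly the first summand on the right of \eqref{e:fineIIIstep}.

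Choosing $\epsilon$ small enough so that the $\epsilon f$ and $\epsilon|Du|^p$ terms can be absorbed into the lower bound $c\int_{A_k}f\ge c c_5\int_{A_k}|Du|^p$ on the left, and collecting all the remaining terms, yields \eqref{e:fineIIIstep}. The main technical delicacy, beyond choosing the right conjugate exponents in Young's inequality, is the bookkeeping: verifying that the summability $s_3>n/p$ guarantees $1-\tfrac{1}{s_3}-\tfrac{1}{p^*}>0$ so that the generalized H\"older used on the $b_3$ term is legitimate, and that the assumption $k>\max\{\|u_0\|_\infty,1\}$ (which implies $k^{p^*-1}\le k^{p^*}$) lets us group the $k^{p^*-1}|A_k|^{1-1/p^*}$-type terms arising from $|u|^{p^*-1}$ into the single $k^{p^*}|A_k|$ term that appears in the statement.
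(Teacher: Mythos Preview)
Your proposal is correct and follows essentially the same route as the paper: test with $(|u|-k)_+\operatorname{sgn}(u)$, bound the left-hand side from below via \eqref{GCfe:comparison inequalities} and Lemma~\ref{Lemma 2}, bound the right-hand side from above via the first inequality in \eqref{GCfe:growth condition for bpair}, absorb the $f$- and $|Du|^p$-contributions by Young's inequality with a small constant, and finish with H\"older on $b_1$ and $b_3$. One small caution on your final bookkeeping remark: the inequality $k^{p^*-1}\le k^{p^*}$ alone does \emph{not} let you replace a term of the form $k^{p^*-1}|A_k|^{1-1/p^*}$ by $k^{p^*}|A_k|$, since the exponents on $|A_k|$ differ; the correct way---which you effectively already used when splitting $(|u|-k)k^{p^*-1}$---is one more application of Young's inequality, producing $\||u|-k\|_{L^{p^*}(A_k)}^{p^*}+k^{p^*}|A_k|$.
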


\begin{proof}
For every $k>\max\{\Vert u_{0}\Vert _{\infty },1\}$, we define the test
function $\varphi _{k}$ as follows 
\begin{equation*}
\varphi _{k}(x):=(|u(x)|-k)_{+}\operatorname{sgn}(u(x))\quad \text{for a.e.}\ x\in
\Omega.
\end{equation*}
Notice that $\varphi _{k}\in W_{0}^{1,F}(\Omega )$, see Remark \ref%
{r:testinW1F}.

Let us consider the super-level sets: $A_{k}:=\{x\in \Omega \,:\,|u(x)|>k\}$
. Using $\varphi _{k}$ as a test function in (\ref{GCfe:aweaksol}) we get 
\begin{equation}
\begin{aligned} I_1:=& \int_{A_{k}} (a(x,u,Du),Du)\,dx \\=& \int_{A_{k}} -
b(x,u,Du) (|u|-k)\operatorname{sgn}(u)\,dx =:I_2. \end{aligned}
\label{NEWaveroiniziostep1}
\end{equation}
Now, we separately consider and estimate $I_{i}$, $i=1,2$.

\medbreak
\textbf{Estimate of $I_1$.}

\medbreak
By the first inequalities in (\ref{GCfe:comparison inequalities}) and (\ref%
{gradient growth (two-sides)}), 
\begin{align}
I_1\ge &\,\int_{A_k}\left\{c_{1}\left( D_{\xi }f\left( x,u,Du\right) ,Du
\right) -c_{2}\left\vert u\right\vert ^{p^*}-b_{1}\left( x\right)\right\}\,dx
\notag \\
\ge&\, \int_{A_k}\left\{2c_1 \left( 1-\tfrac{1}{m}\right) \,f\left(
x,u,Du\right)-c(\left\vert u\right\vert-k) ^{p^*}-ck^{p^*}-b_{1}\left(
x\right)\right\}\,dx\,,  \label{GCfe:I1stimasotto}
\end{align}
for a positive constant $c$. 
%2c_1 \left( 1-\tfrac{1}{m}\right) \,f\left( x,u,\xi \right)
% and by the coercivity
%condition (\ref{coercivity condition}) we get 
%\begin{align}
%I_1\ge &\,\int_{A_k}\left\{c_{1}\left( D_{\xi }f\left( x,u,Du\right) ,Du
%\right) -c_{2}\left\vert u\right\vert ^{\theta}-b_{1}\left(
%x\right)\right\}\,dx  \notag \\
%\ge & \int_{A_k} \left\{ c_1c_{5} 2\left( 1-\tfrac{1}{m}\right)\,\left\vert
%Du\right\vert ^{p} -c_{2}\left\vert u\right\vert ^{\theta}-b_{1}\left(
%x\right)\right\}\,dx.  \label{GCfe:I1stimasotto}
%\end{align}

\medbreak
\textbf{Estimate of $I_2$.}

\medbreak
By the first inequality in (\ref{GCfe:growth condition for bpair}) we have 
\begin{equation*}
-b(x,u,\xi)\operatorname{sgn}(u)\le c_7\,\big(f(x,u,\xi)^{\frac{p^*-1}{p^*}%
}+\left\vert \xi\right\vert ^{p\frac{p^*-1}{p^*}}+\left\vert u\right\vert
^{p^*-1}\big)+b_3(x),
\end{equation*}
therefore 
\begin{equation*}
I_{2}\leq \int_{A_{k}}\big\{c_7\,\big( f(x,u,Du)^{\frac{p^*-1}{p^*}}+|Du|^{p%
\frac{p^*-1}{p^*}}+|u|^{p^*-1}\big)+b_3(x)\big\}(|u|-k) \,dx.
\end{equation*}
We estimate the right-hand side using the Young inequality; thus there
exists $c>0$, such that 
\begin{align*}
& c_7\,\big(f(x,u,Du)^{\frac{p^*-1}{p^*}}+|Du|^{p\frac{p^*-1}{p^*}}\big)%
(|u|-k) \\
\le & \,\frac{c_1c_5}{c_5+1}\left( 1-\tfrac{1}{m}\right) \big\{f\left(
x,u,Du\right)+|Du|^p\}+c\,(|u|-k)^{p^*} \\
\le &\,c_1\left( 1-\tfrac{1}{m}\right) f\left(
x,u,Du\right)+c\,(|u|-k)^{p^*},
\end{align*}
where $c_5$ is the coefficient in the coercivity condition (\ref{coercivity
condition}), a property used in the last inequality. Thus, we get that there
exists $c>0$, such that 
\begin{align*}
I_{2}\leq & \,c_1\left( 1-\tfrac{1}{m}\right)\int_{A_{k}}f\left(
x,u,Du\right)\,dx+c \int_{A_{k}}(|u|-k)^{p^*} \,dx  \notag \\
& +\int_{A_{k}}\big\{c_7|u|^{p^*-1}+b_3(x)\big\}(|u|-k)\,dx\,.
\end{align*}%
For a.e. $x\in A_{k}$, we have $|u|^{p^*-1}\leq c\,(|u|-k)^{p^*-1}
+c\,k^{p^*-1}$; therefore, using the Young inequality once again, 
\begin{align}
I_{2}\leq & \,c_1\left( 1-\tfrac{1}{m}\right)\int_{A_{k}}f\left(
x,u,Du\right)\,dx+c \int_{A_{k}}\big((|u|-k)^{p^*}+k^{p^*}\big) \,dx  \notag
\\
& +\int_{A_{k}}b_3(x)(|u|-k)\,dx\,.  \label{I_4I}
\end{align}
Collecting (\ref{NEWaveroiniziostep1}), (\ref{GCfe:I1stimasotto}), (\ref%
{I_4I}) and using the coercivity condition (\ref{coercivity condition}), we
get 
\begin{align}
\int_{A_{k}}|Du|^{p}\,dx \le c\,
\int_{A_{k}}\left\{(|u|-k)^{p^*}+b_3(x)(|u|-k)+b_{1}(x)+k^{p^*}\right\} \,dx.
\label{e:radice}
\end{align}
for some $c>0$ depending on $n,p,m,c_1,c_5,c_7$. Since the integrability
exponents $s_1$,$s_3$ of $b_1$,$b_{3}$ satisfy $s_1>1 $ and $s_3> \frac{p^*}{%
p^*-p}>\frac{p^*}{p^*-1}$, then 
\begin{equation*}
\int_{A_{k}}b_3(x)(|u|-k)\,dx\leq \,c\,\Vert b_3\Vert _{L^{s_3}(\Omega
)}\Vert |u|-k\Vert _{L^{p^{\ast }}(A_{k})}|A_{k}|^{1-\frac{1}{s_3}-\frac{1 }{
p^{\ast }}}
\end{equation*}
and 
\begin{equation*}
\int_{A_{k}}b_{1}(x)\,dx\leq \Vert b_{1}\Vert _{L^{s_{1}}(\Omega
)}|A_{k}|^{1- \frac{1}{s_{1}}}.
\end{equation*}

Collecting (\ref{e:radice}) and these estimates we obtain the Caccioppoli's
inequality (\ref{e:fineIIIstep}).
\end{proof}

\subsection{The iteration scheme}

Given any real number $d>2\max\{\Vert u_{0}\Vert _{\infty },1\}$ 
%$d>2\Vert u_{0}\Vert _{L^{\infty }(\Omega )}+1$,
we consider the increasing sequence 
\begin{equation}
k_{h}:=d(1-\tfrac{1}{2^{h+1}}),\qquad h\in \mathbb{N}\cup \{0\}
\label{eq.defbetah}
\end{equation}%
and the sequence $(J_{h})_{h\geq 0}$ of nonnegative numbers as follows: 
\begin{equation}
J_{h}:=\int_{A_{k_{h}}}|Du|^{p}\,dx.  \label{eq.defJh}
\end{equation}
Since, for every $h$, $(|u|-k_{h})_{+}\operatorname{sgn}(u)$ is in $%
W_{0}^{1,F}(\Omega )\subseteq W_{0}^{1,p}(\Omega )$ (see Remark \ref%
{r:testinW1F}), then $J_h$ is finite and, by Poincar\'e inequality, there
exists $C_P$, independent of $h$ and $u$, such that 
\begin{equation}  \label{e:poincareJh}
\int_{A_{k_{h}}}(|u|-k_h)^{p^*}\,dx=\|(|u|-k_h)_+\|_{L^{p^*}(\Omega)}^{p^*}
\le c_P\,J_h^{\frac{p^*}{p}},
\end{equation}%
where we used that $|D(|u|)|=|Du|$ almost everywhere in $A_{k_h}$. 
\medbreak

\bigskip

\begin{proposition}
\label{prop.Jhestimate} For every $h\in \mathbb{N}\setminus\{0\}$, 
\begin{equation}
J_{h+1}\leq \,c_{*}\, (1+\Vert Du\Vert _{L^{p}(\Omega )}^{p^*})^{\max \{%
\frac{ 1}{ s_1},\frac{1}{s_{3}}\}} \, 2^{p^{\ast }h} J_{h}^{\frac{p^*}{p}%
(1-\max \{\frac{1}{s_1},\frac{1}{ s_{3}}\})},  \label{e:Jhp1leJh}
\end{equation}
where $c_{\ast }$ is a positive constant depending on the data, the $L^{s_1}$
and the $L^{s_{3}}$ norms of $b_1,b_{3}$, respectively, but it is
independent of $u$ and $d$.
\end{proposition}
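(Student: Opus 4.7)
The approach is to apply the Caccioppoli-type inequality (\ref{e:fineIIIstep}) at the level $k=k_{h+1}$, so that the left-hand side is precisely $J_{h+1}$, and then convert every occurrence of $|A_{k_{h+1}}|$ and of $\||u|-k_{h+1}\|_{L^{p^{\ast}}(A_{k_{h+1}})}$ on the right-hand side into expressions involving only $J_h$ (plus harmless quantities depending on the data and on $\|Du\|_{L^p(\Omega)}$). The sequence $(k_h)$ has been chosen precisely so that $k_{h+1}-k_h=d/2^{h+2}$ is a computable gap; this gap, together with the Poincaré estimate (\ref{e:poincareJh}) at level $h$, is the whole source of the geometric factor $2^{p^{\ast}h}$ in (\ref{e:Jhp1leJh}).

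First I would use the strict inclusion $A_{k_{h+1}}\subset A_{k_h}$ and the Chebyshev-type inequality
\[
|A_{k_{h+1}}|\le \int_{A_{k_h}}\Bigl(\frac{|u|-k_h}{k_{h+1}-k_h}\Bigr)^{p^{\ast}}\,dx
\le \frac{2^{(h+2)p^{\ast}}}{d^{p^{\ast}}}\,c_P\,J_h^{p^{\ast}/p},
\]
which, combined with $d\geq 2$, gives $|A_{k_{h+1}}|\le C\,2^{p^{\ast}h}J_h^{p^{\ast}/p}$. In the same way, since $|u|-k_{h+1}\le |u|-k_h$ pointwise on $A_{k_{h+1}}$, estimate (\ref{e:poincareJh}) yields $\||u|-k_{h+1}\|_{L^{p^{\ast}}(A_{k_{h+1}})}\le c_P^{1/p^{\ast}}J_h^{1/p}$. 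Substituting these two bounds into the four terms on the right-hand side of (\ref{e:fineIIIstep}) and tracking exponents, one finds that each term is controlled by a constant (depending on the data, in particular on $\|b_1\|_{L^{s_1}}$ and $\|b_3\|_{L^{s_3}}$) times $2^{p^{\ast}h}$ and a power of $J_h$; the power of $J_h$ turns out to be $(p^{\ast}/p)(1-1/s_3)$ for the first term, $p^{\ast}/p$ for both the second and third terms, and $(p^{\ast}/p)(1-1/s_1)$ for the fourth term.

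The issue is that $p^{\ast}/p$ and $(p^{\ast}/p)(1-1/s_i)$ are all strictly larger than the target exponent $(p^{\ast}/p)(1-\max\{1/s_1,1/s_3\})$. To land on the precise power in (\ref{e:Jhp1leJh}) I would split off the excess: writing $\alpha:=\max\{1/s_1,1/s_3\}$, each of the four exponents exceeds $(p^{\ast}/p)(1-\alpha)$ by some $\beta\in[0,(p^{\ast}/p)\alpha]$, and the factor $J_h^{\beta}$ is bounded above by $J_h^{(p^{\ast}/p)\alpha}$ (since $J_h\ge 1$ up to absorbing constants, or more cleanly by writing $J_h^{\beta}\le 1+J_h^{(p^{\ast}/p)\alpha}$). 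Finally, since $J_h\le\|Du\|_{L^p(\Omega)}^p$, one has $J_h^{(p^{\ast}/p)\alpha}\le \|Du\|_{L^p(\Omega)}^{p^{\ast}\alpha}\le (1+\|Du\|_{L^p(\Omega)}^{p^{\ast}})^{\alpha}$, which is exactly the external factor appearing in (\ref{e:Jhp1leJh}). Summing the four contributions gives the claimed inequality with a constant $c_{\ast}$ depending only on the structural data.

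The main obstacle is not conceptual but bookkeeping: one must check that the eligibility hypothesis $k_{h+1}>\max\{\|u_0\|_{\infty},1\}$ for Caccioppoli is automatic from $d>2\max\{\|u_0\|_{\infty},1\}$ and $h\geq 1$, and one must trim all exponents of $J_h$ down to the \emph{same} value $(p^{\ast}/p)(1-\alpha)$ while keeping the exponent of $2^h$ uniformly at most $p^{\ast}$ and the residual power of $\|Du\|_{L^p(\Omega)}$ uniformly at most $p^{\ast}\alpha$. The only substantive step in which the norm $\|Du\|_{L^p(\Omega)}$ enters is this exponent-balancing, which explains why the prefactor in (\ref{e:Jhp1leJh}) cannot be avoided under our general growth conditions.
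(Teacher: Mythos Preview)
Your proposal is correct and follows essentially the same route as the paper: apply the Caccioppoli inequality (\ref{e:fineIIIstep}) at level $k_{h+1}$, bound $|A_{k_{h+1}}|$ via the Chebyshev-type estimate combined with the Poincar\'e inequality (\ref{e:poincareJh}) at level $h$, and then equalize the different powers of $J_h$ using $J_h\le \|Du\|_{L^p(\Omega)}^p$. The only cosmetic difference is that the paper first bounds $\||u|-k_{h+1}\|_{L^{p^\ast}(A_{k_{h+1}})}$ by $c_P^{1/p^\ast}J_{h+1}^{1/p}$ via Poincar\'e at level $h{+}1$ and then invokes the monotonicity $J_{h+1}\le J_h$, whereas you go directly to $J_h^{1/p}$ using $|u|-k_{h+1}\le |u|-k_h$; and the paper packages the exponent balancing as the single inequality $\max\{J_h,\,J_h^{1-1/s_1},\,J_h^{1-1/s_3}\}\le (1+\|Du\|_{L^p(\Omega)}^p)^{\alpha}J_h^{1-\alpha}$ with $\alpha=\max\{1/s_1,1/s_3\}$, which is exactly your ``split off the excess'' step written more compactly.
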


We notice that, by the assumptions on the parameters, we have 
\begin{equation*}
\tfrac{p^{\ast }}{p}\Big(1-\max \big\{\tfrac{1}{s_{1}},\tfrac{1}{s_{3}}\big\}%
\Big)>1\,.
\end{equation*}

\begin{proof}
Since $(k_{h})_{h}$ is increasing, then the sequence $(J_{h})_{h}$ is
decreasing. Moreover, by taking into account the definitions of $J_{h}$ and $%
k_{h}$ we have, by (\ref{e:poincareJh}), 
\begin{equation}
\begin{split}
c_P\,J_{h}^{\frac{p^*}{p}}& \ge \int_{A_{k_{h}}}(|u|-k_{h})^{p^{\ast
}}\,dx\geq \int_{A_{k_{h+1}}}(|u|-k_{h})^{p^{\ast }}\,dx \\
& \geq (k_{h+1}-k_{h})^{p^{\ast }}\,\big|A_{k_{h+1}}\big|=\bigg(\frac{d}{
2^{h+2}}\bigg)^{p^{\ast }}\,\big|A_{k_{h+1}}\big|\,,
\end{split}
\label{eq.estimJhgeq}
\end{equation}
so we get 
\begin{equation}
\big|A_{k_{h+1}}\big|\leq c_P \,4^{p^{\ast }}\,\big(\tfrac{2^{h}}{d}\big)%
^{p^{\ast }}J_{h}^{\frac{p^*}{p}}.  \label{abetah+1}
\end{equation}
%	Taking into account that $|D(|u|)|=|Du|$ a.e. in $A_{k_{h+1}}$, 
%	\begin{align}
%		J_{h+1}^{\frac{p}{p^{\ast }}}& =\left(
%		\int_{A_{k_{h+1}}}(|u|-k_{h+1})^{p^{\ast }}\,dx\right) ^{\frac{p}{p^{\ast }}%
%		} = \left( \int_{\Omega }((|u|-k_{h+1})_{+})^{p^{\ast }}\,dx\right) ^{\frac{%
%				p }{ p^{\ast }}}  \notag \\
%		& \leq C_{S}^{p}\int_{\Omega }\big|D((|u|-k_{h+1})_{+})\big| %
%		^{p}\,dx=C_{S}^{p}\int_{A_{k_{h+1}}}|Du|^{p}\,dx,  \label{e:etahbarrho2}
%	\end{align}
%	where $c_{S}$ is the Sobolev constant. To estimate the last integral in (\ref	{e:etahbarrho2}),
Therefore, by the Caccioppoli estimate (\ref{e:fineIIIstep}) and by (\ref%
{e:poincareJh}), 
\begin{align*}
J_{h+1}=&\int_{A_{k_{h+1}}}|Du|^{p}\,dx\leq \,c\,\Vert b_3\Vert
_{L^{s_3}(\Omega )}J_{h+1}^{\frac{1}{p}}|A_{k_{h+1}}|^{1-\frac{1}{ s_3}-%
\frac{1}{p^{\ast }}}  \notag \\
& +c\,J_{h+1}^{\frac{p^*}{p}} +c\,k_{h+1}^{p^*}|A_{k_{h+1}}|+c\,\Vert
b_{1}\Vert _{L^{s_{1}}(\Omega )}|A_{k_{h+1}}|^{1-\frac{1}{s_{1}}},
\end{align*}
that implies, by (\ref{abetah+1}), by using that $(J_h)$ is a decreasing
sequence and $k_{h+1}\leq d$, 
\begin{align}
J_{h+1}\leq &\,c\,\Vert b_3\Vert _{L^{s_3}(\Omega )}\big(\tfrac{2^{h}}{d}%
\big)^{p^{\ast }-\frac{p^{\ast } }{ s_3}-1}J_{h}^{\frac{p^*}{p}\big(1-\frac{1%
}{s_3}\big)}  \notag \\
& +c\,2^{p^*h} J_{h}^{\frac{p^*}{p}} +c\,\Vert b_{1}\Vert _{L^{s_{1}}(\Omega
)}\big(\tfrac{2^{h}}{d}\big) ^{p^{\ast }-\frac{p^{\ast }}{s_{1}}}J_{h}^{%
\frac{p^*}{p}\big(1-\frac{1}{s_{1}}\big)},  \label{e:postusoIIIstep2}
\end{align}
with a constant $c$ independent of $d$, $h$ and $u$. Notice that $J_{h}\leq
\Vert D u\Vert _{L^{p}(\Omega )}^{p}$ for every $h\in \mathbb{N}$, so that 
\begin{equation*}
\max \{J_{h};\;J_{h}^{1-\frac{1}{s_1}};\;J_{h}^{1-\frac{1}{s_{3}}}\}\leq %
\Big(1+\Vert D u\Vert _{L^{p}(\Omega )}^{p}\Big)^{\max \{\frac{ 1}{ s_1},%
\frac{1}{s_{3}}\}}J_{h}^{1-\max \{\frac{1}{s_1},\frac{1}{ s_{3}} \}}.
\end{equation*}
If we denote 
\begin{equation}  \label{d:sigma}
\sigma :=p^{\ast }-\max \left\{\tfrac{p^{\ast }}{s_{1}};\;\tfrac{ p^{\ast }}{%
s_3}+1 \right\} ,
\end{equation}
then $\sigma>0$, due to the assumptions on $s_1,s_3$. Therefore, by
inequality (\ref{e:postusoIIIstep2}) and taking into account that $d>1$, we
obtain 
\begin{align*}
J_{h+1}\leq & \,c_{0}\left( 1+\Vert b_{1}\Vert _{L^{s_{1}}(\Omega )}+\Vert
b_3\Vert _{L^{s_3}(\Omega )}\right) \times \\
& \qquad \qquad \times \big(2^{p^{\ast }}\big) ^{h}(1+\Vert Du\Vert
_{L^{p}(\Omega )}^{p^*})^{\max \{\frac{ 1}{ s_1},\frac{1}{s_{3}}\}} J_{h}^{%
\frac{p^*}{p}\big(1-\max \{\frac{1}{s_1},\frac{1}{ s_{3}}\}\big)}.
\end{align*}
Therefore we get (\ref{e:Jhp1leJh}) with 
\begin{equation}
c_{\ast }:=c_0\, \left( 1+\Vert b_{1}\Vert _{L^{s_{1}}(\Omega )}+\Vert
b_3\Vert _{L^{s_3}(\Omega )}\right).  \label{e:Jcstar}
\end{equation}
\end{proof}

\bigskip

\subsection{Conclusion of the proof of Theorem \protect\ref{main boundeness
result}}

We are ready to conclude the proof of the global boundedness result stated
in Theorem \ref{main boundeness result}. Let $u\in u_{0}+W_{0}^{1,F}(\Omega
) $ be a weak solution to (\ref{GCfe:Dirichlet problem}), with $u_{0}\in
L^{\infty }(\Omega )\cap W^{1,F}(\Omega )$. Let $d>2\max \{\Vert u_{0}\Vert
_{\infty },1\}$ and let $(J_{h})_{h\geq 0}$ be the sequence defined in (\ref%
{eq.defJh}). Owing to Proposition \ref{prop.Jhestimate}, we have the
estimate 
\begin{equation}
J_{h+1}\leq L\,(2^{p^{\ast }})^{h}\,J_{h}^{1+\delta }\qquad (h\in \mathbb{N}%
\cup \{0\}),  \label{e:4.28'}
\end{equation}%
where $\delta $ is 
\begin{equation}
\delta :=\tfrac{p^{\ast }}{p}\big(1-\max \{\tfrac{1}{s_{1}},\tfrac{1}{s_{3}}%
\}\big)-1,  \label{eq.alpha}
\end{equation}%
which is positive because $s_{1},s_{3}>\frac{n}{p}$, and 
\begin{equation*}
L:=c_{\ast }\,(1+\Vert Du\Vert _{L^{p}(\Omega )}^{p^{\ast }})^{\max \{\frac{1%
}{s_{1}},\frac{1}{s_{3}}\}},
\end{equation*}%
where $c_{\ast }$, independent of $d$, is defined as in (\ref{e:Jcstar}).
Since $u\in W^{1,p}(\Omega )$, it is possible to choose $d$ in such a way
that 
\begin{equation}
J_{0}:=\int_{A_{\frac{d}{2}}}|Du|^{p}\,dx\leq L^{-\frac{1}{\delta }%
}\,(2^{p^{\ast }})^{-\frac{1}{\delta ^{2}}}.  \label{eq.claimJzero}
\end{equation}%
With (\ref{eq.claimJzero}) at hand, we are entitled to apply a well known
classical lemma of Real Analysis (see, e.g., \cite[ Lemma 7.1]{Giusti 2003
book}), which says that if $(z_{h})_{h\geq 0}$ is a sequence of positive
real numbers satisfying the fol\-low\-ing recursive relation 
\begin{equation}
z_{h+1}\leq L\,\zeta ^{h}z_{h}^{1+\delta }\qquad (h\in \mathbb{N}\cup \{0\}),
\label{eq.estimzhlemma}
\end{equation}%
where $L,\delta >0$ and $\zeta >1$, and if $z_{0}\leq L^{-\frac{1}{\delta }%
}\zeta ^{-\frac{1}{\delta ^{2}}}$, then $z_{h}\leq \zeta ^{-\frac{h}{\delta }%
}z_{0}\quad $for every $h\geq 0$. In particular, $z_{h}\rightarrow 0$ as $%
h\rightarrow \infty $. From this result, (\ref{e:4.28'}) and (\ref%
{eq.claimJzero}), we obtain 
\begin{equation*}
\int_{A_{d}}|Du|^{p}\,dx=\lim_{h\rightarrow \infty
}\int_{A_{k_{h}}}|Du|^{p}\,dx=\lim_{h\rightarrow \infty }J_{h}=0\,.
\end{equation*}
Since $u\in u_0+W_0^{1,p}(\Omega)$ and $d>\Vert u_{0}\Vert _{L^{\infty
}(\Omega )}$, then $(|u|-d)_+\in W_{0}^{1,p}(\Omega)$; therefore, 
\begin{equation*}
\int_\Omega(|u|-d)_+^{p^*}\,dx\le c_P\Big(\int_{A_{d}}|Du|^{p}\,dx\Big)^{%
\frac{p^*}{p}}=0,
\end{equation*}
that implies $|u|\le d$ a.e. We have so proved that $u\in L^{\infty }(\Omega
)$.

\subsection{Sketch of the proof of Theorem \protect\ref{main boundeness
result with epsilon}}

\label{s:dimconepsilon} In this section, we provide a sketch of the proof of
Theorem \ref{main boundeness result with epsilon}; in particular we give
details on how to derive the proof of Theorem \ref{main boundeness result
with epsilon} from that of Theorem \ref{main boundeness result}.

Unlike Theorem \ref{main boundeness result}, about the vector field $a\left(
x,u,Du\right) :=\left( a^{i}\left( x,u,\xi \right) \right) _{i=1,\ldots ,n}$%
, according to (\ref{comparison inequalities with epsilon})$_{1}$ we now
assume 
\begin{equation}
\left( a\left( x,u,\xi \right) ,\xi \right) \geq c_{1}\,\left( D_{\xi
}f\left( x,u,\xi \right) ,\xi \right) -c_{2}\left\vert u\right\vert ^{\theta
}-b_{1}\left( x\right) ,  \label{GCfe:comparison inequalitiesepsilon}
\end{equation}%
for an exponent $\theta \in \lbrack 0,p^{\ast })$; in Theorem \ref{main
boundeness result} we assumed $\theta =p^{\ast }$, see the first inequality
in (\ref{comparison inequalities}). As far as the right-hand side $b\left(
x,u,\xi \right) $ is concerned, in place of the first inequality in (\ref%
{growth condition for b}), according to (\ref{growth condition for b with
epsilon})$_{1}$ we now assume 
\begin{equation}
\operatorname{sign}\left( u\right) \,b\left( x,u,\xi \right) \geq -\,c_{7}\,\big(%
f(x,u,\xi )^{\alpha }+|\xi |^{r-1}+\left\vert u\right\vert ^{s-1}\big)%
+b_{3}(x)  \label{GCfe:growth condition for bpairconepsilon}
\end{equation}%
for some exponents ${\alpha \in \big[0,1-\tfrac{1}{p^{\ast }}}\big)$, $r\in {%
\big[}1,p+\tfrac{p}{n}\big)$, $s\in \lbrack 1,p^{\ast })$; therefore, now we
assume a more restrictive condition, since in (\ref{growth condition for b})
we had $\alpha =1-\tfrac{1}{p^{\ast }}$, $r=p+\tfrac{p}{n}$, $s=p^{\ast }$.

The scheme of the proof of Theorem \ref{main boundeness result with epsilon}
is formally the same of that of Theorem \ref{main boundeness result};
however, under these more restrictive assumptions we gain the possibility to
reach an explicit estimate of the $L^{\infty }$-norm of the solution $u$,
see (\ref{estimate in the main theorem with epsilon}).

Due to these different assumptions, in the proof of the Caccioppoli
inequality we obtain an estimate that is slightly more complicated than (\ref%
{e:radice}): 
\begin{align}
\int_{A_{k}}|Du|^{p}\,dx\leq & \,c\,\int_{A_{k}}\big\{(|u|-k)^{\theta
}+(|u|-k)^{s}+(|u|-k)^{\frac{1}{1-\alpha }}+(|u|-k)^{\frac{p}{p-r+1}}\big\}dx
\notag \\
& +c\,\int_{A_{k}}\left\{ b_{3}(x)(|u|-k)+b_{1}(x)+k^{\theta }+k^{s}\right\}
\,dx.
\end{align}%
Since $\theta ,s<p^{\ast }$ and $p^{\ast }(\alpha -1)>1$ and $r<p+\tfrac{p}{n%
}$, we can use the H\"{o}lder inequalities with exponents $\frac{p^{\ast }}{%
\theta }$, $\frac{p^{\ast }}{s}$, $p^{\ast }(1-\alpha )$ and $p^{\ast }\frac{%
p-r+1}{p}$, and the integrability assumptions on $b_{1}$,$b_{3}$, to get the
Caccioppoli-type inequality 
\begin{align*}
\int_{A_{k}}|Du|^{p}\,dx\leq & \,c\Vert b_{3}\Vert _{L^{s_{3}}(\Omega
)}\Vert |u|-k\Vert _{L^{p^{\ast }}(A_{k})}|A_{k}|^{1-\frac{1}{s_{3}}-\frac{1%
}{p^{\ast }}} \\
& +c\,\Vert |u|-k\Vert _{L^{p^{\ast }}(A_{k})}^{\theta }|A_{k}|^{1-\frac{%
\theta }{p^{\ast }}}+c\,\Vert |u|-k\Vert _{L^{p^{\ast
}}(A_{k})}^{s}|A_{k}|^{1-\frac{s}{p^{\ast }}} \\
& +c\Vert |u|-k\Vert _{L^{p^{\ast }}(A_{k})}^{\frac{1}{1-\alpha }}|A_{k}|^{1-%
\frac{1}{p^{\ast }(1-\alpha )}} \\
& +c\Vert |u|-k\Vert _{L^{p^{\ast }}(A_{k})}^{\frac{p}{p-r+1}}|A_{k}|^{1-%
\frac{p}{p^{\ast }(p-r+1)}} \\
& +c\,(k^{\theta }+k^{s})|A_{k}|+c\Vert b_{1}\Vert _{L^{s_{1}}(\Omega
)}|A_{k}|^{1-\frac{1}{s_{1}}}.
\end{align*}%
This inequality replaces (\ref{e:fineIIIstep}) and it is the starting point
to activate an iteration procedure, involving a sequence $(J_{h})_{h\geq 0}$
of nonnegative numbers. This sequence is now defined not as in (\ref%
{eq.defJh}), but as follows: 
\begin{equation*}
J_{h}:=\int_{A_{k_{h}}}(|u|-k_{h})^{p^{\ast }}\,dx.
\end{equation*}%
What it can be proved is that 
\begin{align*}
J_{h+1}\leq & \ c_{\ast }\left( 1+\Vert u\Vert _{L^{p^{\ast }}(\Omega
)}^{p^{\ast }}\right) ^{\frac{p^{\ast }}{p}\max \left\{ \frac{1}{s_{1}},%
\frac{1}{s_{3}}\right\} }\times \\
& \times \tfrac{1}{d^{\frac{p^{\ast }}{p}\sigma }}\left( 2^{\frac{p^{\ast }}{%
p}p^{\ast }}\right) ^{h}J_{h}^{\frac{p^{\ast }}{p}\left( 1-\max \left\{ 
\frac{1}{s_{1}},\frac{1}{s_{3}}\right\} \right) },
\end{align*}%
where $\sigma :=p^{\ast }-\max \big\{\tfrac{1}{1-\alpha };\;\tfrac{p}{p-r+1}%
;\,\theta ;\;s;\;\tfrac{p^{\ast }}{s_{1}};\;\tfrac{p^{\ast }}{s_{3}}+1\big\}$
, and $c_{\ast }$ is a positive constant depending on the data, the $%
L^{s_{1}}$ and the $L^{s_{3}}$ norms of $\,b_{1},b_{3}$, respectively, but
it is independent of $u$ and $d$. This inequality, similar to (\ref%
{eq.estimzhlemma}), allows to conclude, thanks to a well know classical
lemma of Real Analysis, as stated for instance in \cite[Lemma 7.1]{Giusti
2003 book}.

\bigskip

\bigskip

\textbf{Acknowledgement} \ The authors are members of the \textit{Gruppo
Nazionale per l'Analisi Matematica, la Probabilit\`{a} e le loro
Applicazioni (GNAMPA)} of the \textit{Istituto Nazionale di Alta Matematica
(INdAM)}. G. Cupini acknow\-ledges financial support under the National
Recovery and Resilience Plan (NRRP), Mission 4, Component 2, Investment 1.1,
Call for tender No. 104 published on 2.2.2022 by the Italian Ministry of
University and Research (MUR), funded by the European Union -
NextGenerationEU - Project PRIN\_CITTI 2022 - Title \textquotedblleft
Regularity problems in sub-Riemannian structures\textquotedblright\ - CUP
J53D23003760006 - Bando 2022 - Prot. 2022F4F2LH, and also through the
INdAM-GNAMPA Project CUP E53C23001670001 - \emph{Interazione ottimale tra la
regolarit\`{a} dei coefficienti e l'anisotropia del problema in funzionali
integrali a crescite non standard}.

\bigskip

\bigskip \textbf{Data availability} \ \ The manuscript has no associated
data.

\textbf{Declarations}

\textbf{Conflict of interest} \ \ On behalf of all authors, the
corresponding author states that there is no conflict of interest.

\bigskip

\end{document}